\documentclass[amsthm,10pt,onecolumn]{elsart3p}

\usepackage{amsmath,amsfonts,amsthm}
\usepackage{amssymb}
\usepackage[all]{xy}
\usepackage{enumerate}
\usepackage{epsfig}
\usepackage{mathrsfs}	
\usepackage{graphicx}

\usepackage{natbib}

\hyphenation{pa-ra-me-tri-zed pa-ra-me-tri-za-tion pa-ra-me-tri-za-tions ho-mo-ge-ne-ous sy-zy-gy sy-zy-gies bi-ho-mo-ge-ne-ous pa-ra-me-tri-zes}

\def\NN{\mathbb{N}}
\def\ZZ{\mathbb{Z}}
\def\kk{\mathbb{K}}
\def\PP{\mathbb{P}}
\def\RR{\mathbb{R}}
\def\Sc{\mathscr{S}}
\def\Hc{\mathscr{H}}
\def\Zc{\mathcal{Z}}

\def\Pc{\mathscr{P}}
\def\AA{\mathbb{A}}

\def\New{\mathrm{N}}
\def\Supp{\mathrm{Supp}}

\def\Cc{\mathcal{C}}  
\def\Tc{\mathscr{T}}
\def\Syz{\mathrm{Syz}}
\def\ker{\mathrm{ker}}
\def\endd{\mathrm{end}}

\def\Proj{\mathrm{Proj}}
\def\deg{\mathrm{deg}}

\def\Sym{\mathrm{Sym}}

\def\ann{\mathrm{ann}}
\def\sat{\mathrm{sat}}
\def\indeg{\mathrm{indeg}}
\def\Hom{\mathrm{Hom}}
\def\Homgr{\mathrm{Homgr}}
\def\im{\mathrm{im}}

\def\dim{\mathrm{dim}}
\def\codim{\mathrm{codim}}
\def\depth{\mathrm{depth}}
\def\Ext{\mathrm{Ext}}

\def\pp{\mathfrak{p}}
\def\mm{\mathfrak{m}}

\begin{document}

\begin{frontmatter}

\title{Matrix representations for toric parametrizations}

\author{Nicol\'{a}s Botbol}
\address{Departamento de Matem\'atica\\
FCEN, Universidad de Buenos Aires, Argentina \\
\& Institut de Math\'ematiques de Jussieu \\
Universit\'e de P. et M. Curie, Paris VI, France \\
E-mail address: nbotbol@dm.uba.ar
}

\author{Alicia Dickenstein}
\address{Departamento de Matem\'atica\\
FCEN, Universidad de Buenos Aires \\
Ciudad Universitaria, Pab.I \\
1428 Buenos Aires, Argentina \\
E-mail address: alidick@dm.uba.ar}

\author{Marc Dohm}
\address{Laboratoire J. A. Dieudonn\'e \\ Universit\'e de Nice - Sophia Antipolis \\
Parc Valrose, 06108 Nice Cedex 2, France \\
E-mail address: dohm@unice.fr}

\thanks{The authors were partially supported by the project ECOS-Sud A06E04. NB and AD were 
partially supported by UBACYT X064, CONICET PIP 5617 and ANPCyT PICT
20569, Argentina. MD was partially supported by the project GALAAD, INRIA Sophia Antipolis, France.}

\begin{abstract}
\noindent In this paper we show that a surface in $\PP^3$ parametrized over a 2-dimensional toric variety $\Tc$ can be represented by a matrix of linear syzygies if the base points are finite in number and form locally a complete intersection. This constitutes a direct generalization of the corresponding result over $\PP^2$ established in \cite{BuJo03} and \cite{MR2172855}. Exploiting the sparse structure of the parametrization, we  obtain significantly smaller matrices than in the homogeneous case and the method becomes applicable to parametrizations for which it previously failed. We also treat the important case $\Tc=\PP^1 \times \PP^1$ in detail and give numerous examples.
\end{abstract}

\begin{keyword}
matrix representation, rational surface, syzygy, approximation complex,
implicitization, toric variety 
\end{keyword}

\end{frontmatter}

\section{Introduction}

Rational algebraic curves and surfaces can be described in several different ways, the most common being parametric and implicit representations.
Parametric representations describe the geometric object as the image of a rational map, whereas implicit representations describe it as the set of points verifying a certain algebraic condition, e.g. as the zeros of a polynomial equation. Both representations have a wide range of applications in Computer Aided Geometric Design (CAGD), and depending on the problem one needs to solve, one or the other might be better suited. To give a simple example, the parametric description is better for drawing a surface, as it allows to rapidly generate points on the surface, which can then be interpolated, whereas an implicit representation is better adapted for testing if a given point lies on the surface, since one only needs to check whether the point verifies the algebraic condition that defines the surface.
It is thus interesting to be able to pass from the parametric representation to the implicit
equation. This is a classical problem and there are numerous approaches to its solution, see \cite{SC95} and \cite{Co01} for good historical overviews. However, it turns out that the implicitization problem is computationally
difficult. A promising alternative suggested in \cite{BD07} is
to compute a so-called \textit{matrix representation} instead, which is easier to compute but still shares some of the advantages of the implicit equation. Here is the definition. \medskip

\begin{defn}
Let $\Hc \subset \PP^n$ be a hypersurface. A matrix $M$ with entries in the polynomial ring $\kk[T_0,\ldots,T_n]$ is called a {\slshape representation matrix} of $\Hc$ if
it is generically of full rank and if the rank of $M$ evaluated in a point of $\PP^n$ drops if and only if the point lies on $\Hc$.
\end{defn}

 \medskip
It follows immediately that a matrix $M$ represents $\Hc$ if and only if the greatest common divisor of all its minors of maximal size is a power of the homogeneous implicit equation $F \in \kk[T_0,\ldots,T_n]$ of $\Hc$. 
One major ingredient in the construction of such matrices are syzygies. The theory of syzygies
has been developed in the theoretical context of commutative algebra at the beginning of the 20th century by mathematicians such as David Hilbert. However, it was only in the 1990s that the CAGD and geometric modeling community
discovered that the concept of syzygies is useful in their field. Initially unaware of the connections to commutative algebra, \cite{SC95}, \cite{SSQK94}, \cite{SGD97}, and numerous other authors labeled this approach the method of ``moving curves'' (or ``moving surfaces'') and showed how it can be used to express the implicit equation as a determinant.

In the case of a planar rational curve $\Cc$ given by a parametrization of the form
$\AA^1 \stackrel{f}{\dashrightarrow} \AA^2$, $s \mapsto \left(\frac{f_1(s)}{f_3(s)},\frac{f_2(s)}{f_3(s)}\right)$,
where $f_i \in \kk[s]$ are coprime polynomials of degree $d$ and $\kk$ is a field,
a linear syzygy (or moving line) is a linear relation on the polynomials $f_1,f_2,f_3$, i.e.
a linear form $L = h_1T_1+h_2T_2+h_3T_3$ in the variables $T_1,\ldots,T_3$ and with polynomial coefficients $h_i \in \kk[s]$ such that $\sum_{i=1,2,3} h_i f_i =0$. We denote by $\Syz (f)$ the set of all those linear syzygies forms and for any integer $\nu$ the graded part $\Syz(f)_\nu$ of syzygies of degree at most $\nu$. Actually, to be precise, one should homogenize the $f_i$ with respect to a new variable and consider $\Syz (f)$ as a graded module here. It is obvious that $\Syz(f)_\nu$ is a finite-dimensional $\kk$-vector space and one can easily obtain a basis $(L_1,\ldots,L_k)$ by solving a linear system. We define the matrix $M_\nu$ of coefficients of the $L_i$ with respect to a $\kk$-basis of $\kk[s]_\nu$ as
$$M_\nu  = 
\left(
\begin{array}{cccc}
 L_1 & L_2 & \cdots & L_k
\end{array}\right),
$$
that is, the coefficients of the syzygies $L_i$ form the 
columns of the matrix. Note that the entries of this matrix are linear forms in the variables $T_1,T_2,T_3$
with coefficients in the field $\kk$. Let $F$ denote the homogeneous implicit equation of the curve and $\deg(f)$ the 
degree of the parametrization as a rational map. Intuitively, $\deg(f)$ measures how many times the curve is traced. It is known that for $\nu \geq d-1$, the matrix $M_\nu$ is a representation matrix; more precisely: 
if $\nu=d-1$, then $M_\nu$ is a square matrix, such that $\det(M_\nu)=F^{\deg(f)}$. Also, 
if $\nu \geq d$, then $M_\nu$ is a non-square matrix with more columns than rows, such that the greatest common divisor of its minors of maximal size equals $F^{\deg(f)}$.

In other words, one can always represent the curve as a square matrix of linear syzygies. In principle, one could now
actually calculate the implicit equation. However, it might be advantageous to avoid the costly determinant computation and work directly with the matrix instead, as it has the advantage of making the well-developed theory and tools of linear algebra applicable to solve geometric problems. For instance, testing whether a point $P$ lies on the curve only requires computing the rank of $M_\nu$ evaluated in $P$. 
Other interesting results using square matrix representations directly to solve geometric problems are presented, for example, in \cite{ACGS07} or \cite{Ma94}, in which intersection problems are treated by means of eigenvalue techniques.

It is a natural question whether this kind of matrix representation can be generalized to rational surfaces defined as the image of a map
\begin{eqnarray*}
 \AA^2 & \stackrel{f}{\dashrightarrow} & \AA^3 \\
(s,t) & \mapsto & \left(\frac{f_1(s,t)}{f_4(s,t)},\frac{f_2(s,t)}{f_4(s,t)},\frac{f_3(s,t)}{f_4(s,t)}\right)
\end{eqnarray*}
where $f_i \in \kk[s,t]$ are coprime polynomials of degree $d$. In order to put the problem in the context of graded modules, one first has to consider an associated projective map
\begin{eqnarray*}
 \Tc &  \stackrel{g}{\dashrightarrow}  & \PP^3 \\
 P & \mapsto & (g_1(P):g_2(P):g_3(P):g_4(P))
\end{eqnarray*}
where $\Tc$ is a 2-dimensional projective toric variety (for example $\PP^2$ or $\PP^1 \times \PP^1$) with coordinate ring $A$ and the $g_i \in A$ are homogenized versions of their affine counterparts $f_i$. In other words, $\Tc$ is a suitable compactification of the affine space $(\AA^*)^2$ \cite{Co03,Fu93}. In this case, a linear syzygy (or moving plane) of the parametrization $g$ is a linear relation on the $g_1,\ldots,g_4$, i.e. a linear form $L = h_1T_1+h_2T_2+h_3T_3+h_4T_4$ in the variables $T_1,\ldots,T_4$ with $h_i \in \kk[s,t]$ such that
\begin{equation}\sum_{i=1,\ldots,4} h_i g_i =0 \label{syzygyequation} \end{equation}
\noindent Exactly in the same way as for curves, one can set up the matrix
$M_\nu$ of coefficients of the syzygies in a certain degree $\nu$, but unlike in the curve case, it is in general not possible to choose a degree $\nu$ such that $M_\nu$ is a square matrix representation of the surface. In recent years, two main approaches have been proposed to deal with this problem:
\begin{itemize}
\item One allows the use of quadratic syzygies (or higher-order syzygies) in addition to the linear syzygies in
order to be able to construct square matrices.
\item One only uses linear syzygies as in the curve case and obtains non-square representation matrices.
\end{itemize}

The first approach using linear and quadratic syzygies (or moving planes and quadrics) has been treated in \cite{Co03} for
base-point-free homogeneous parametrizations, i.e. $\Tc=\PP^2$, and \cite{BCD03} does the same in the presence of base points. In
\cite{AHW05}, square matrix representations of bihomogeneous parametrizations, i.e. $\Tc=\PP^1 \times \PP^1$, are constructed with
linear and quadratic syzygies, whereas \cite{KD06} gives such a construction for parametrizations over toric varieties of dimension 2. The methods using quadratic syzygies usually require additional conditions on the parametrization and the choice of the quadratic syzygies is often not canonical. 

The second approach, even though it does not produce square matrices, has certain advantages, in particular in the sparse setting that we present.
In previous publications, this approach with linear syzygies, which relies on the use of the so-called approximation complexes has been developed in the case $\Tc=\PP^2$, see for example \cite{BuJo03}, \cite{MR2172855}, and \cite{Ch06}, and in \cite{BD07} for  bihomogeneous parametrizations of degree $(d,d)$. However, for a given affine parametrization $f$, these two varieties are not necessarily the best choice of a compactification of affine space, since they do not always reflect well the combinatorial structure of the polynomials $f_1,\ldots,f_4$.
In this paper we will extend the method to a much larger class of varieties, namely toric varieties of dimension 2, and we will see that this generalization allows us to choose a ``good'' toric compactification of $(\AA^*)^2$ depending on the polynomials $f_1,\ldots,f_4$, which makes the method applicable in cases where it failed over $\PP^2$ or $\PP^1 \times \PP^1$ and we will also see that it is significantly more efficient and leads to much smaller representation matrices. 

The main idea of our method is similar to the one in \cite{BD07}. We use a (general) toric embedding to consider our domain as a 2-dimensional toric variety contained in a higher-dimensional projective space, which we present in Section~\ref{sec:setting}. Contrary to the cited paper, this natural domain will not be in general a hypersurface and its coordinate ring will usually not be Gorenstein, which means that we have to give new proofs for some of the results in which this property was used. In Section~\ref{approxT} we proceed to establish the necessary homological tools and in particular to derive bounds on local cohomology
in Theorem~\ref{locosymalgT}, our main technical result. After that, we will see in Section~\ref{sec:equation}
that we can deduce the validity of the approach from previous results to produce an efficient representation matrix for the implicit equation (see Corollary~\ref{cor:main}). The particular case of bihomogeneous parametrizations of any bidegree is illustrated in 
Section~\ref{segreT}. We then show the advantages of our method through several examples in Section~\ref{sec:examples}.
After some concluding remarks which summarize the scope of the paper in Section~\ref{sec:final},  an implementation in Macaulay2 \cite{M2} for the important special case $\Tc=\PP^1 \times \PP^1$ is included as an appendix.

\section{Toric embeddings} \label{sec:setting}

\noindent Let $\kk$ be a field. All the varieties considered hereafter are understood to be taken over $\kk$. We suppose given a rational map
\begin{eqnarray*}
 \AA^2 &  \stackrel{f}{\dashrightarrow}  & \PP^3 \\
(s,t) & \mapsto & (f_1:f_2:f_3:f_4)(s,t)
\end{eqnarray*}
where $f_i \in \kk[s,t]$ are polynomials. We assume that
\begin{itemize}
\item  $f$ is a generically finite map onto its image and hence
parametrizes an irreducible surface $\Sc \subset \PP^3$ 
\item $\gcd(f_1,\ldots,f_4)=1$, which means that there are only finitely many
base points.
\end{itemize}

\noindent We briefly introduce some basic notions from toric geometry. These constructions are investigated in more detail in \cite[Sect. 2]{KD06}, \cite{Co03b}, and \cite[Ch. 5 \& 6]{GKZ94}.\medskip

\begin{defn} Let ${p= \sum_{(\alpha,\beta) \in \ZZ^2} p_{\alpha,\beta} s^\alpha t^\beta \in \kk[s,t]}$. We define the  support $\Supp(p)$ to be the set of all the exponents which appear in $p$, i.e. $$\Supp(p) = \{ (\alpha,\beta) \in \ZZ^2 \: | \: p_{\alpha,\beta} \neq 0 \} \subset \ZZ^2$$ 
The Newton polytope $\New(f) \subset \RR^2$, where $f=(f_1,f_2,f_3,f_4)$, is defined as the convex hull of the union $\bigcup_i  \Supp(f_i)$ in $\RR^2$ of the supports of the $f_i$. In other words, $\New(f)$ is the smallest convex lattice
polygon in $\RR^2$ containing all the exponents appearing in one of the $f_i$. Note that our hypothesis that $f$ is generically
finite implies that $\New(f)$ is two-dimensional.
Furthermore, let $d \in \NN$ be the biggest integer such that $\New(f)$ equals
\[d \cdot \New'(f)\, = \, \{ p_1 + \dots + p_d, \quad p_i \in \New'(f) \},\] 
where $\New'(f)$ is a lattice polygon. In other words, $\New'(f)$ is the smallest possible homothety of $\New(f)$ with integer vertices.
\end{defn}
\medskip

\noindent Then $\New'(f)$ defines a two-dimensional projective toric variety $\Tc \subseteq \PP^m$, as explained in \cite{Co03b}, where
$m+1$ is the cardinality of $\New'(f) \cap \ZZ^2$. It is defined as the closed image of the embedding
\begin{eqnarray*}
 (\AA^*)^2   &  \stackrel{\rho}{\hookrightarrow}  & \PP^m \\
(s,t) & \mapsto & (\ldots : s^i t^j  : \ldots)
\end{eqnarray*}
where $(i,j) \in \New'(f) \cap \ZZ^2$. For example, the triangle between the points $(0,1)$, $(1,0)$, and $(0,0)$ corresponds to $\PP^2$ and $\PP^1 \times \PP^1$ has a rectangle as polygon. The rational map $f$ factorizes through $\Tc$ in the following way
\begin{equation}
\xymatrix{
(\AA^*)^2 \ar@{-->}[r]^-{f} \ar@{->}[d]^{\rho} & \PP^3 \\
\Tc \ar@{-->}[ur]_g }       \label{diagram2T}
\end{equation}
where $g$ is given by four polynomials $g_1,\ldots, g_4$  of degree $d$ in
$m$ variables. Thus, we have extended the affine parametrization $f$ to a parametrization $g$ of $\Sc$ over
the projective variety $\Tc$
 \begin{eqnarray*}
 \Tc   &  \stackrel{g}{\dashrightarrow}  & \PP^3 \\
 P & \mapsto & (g_1(P):\ldots:g_4(P))
\end{eqnarray*} 
for which we will adapt the method of approximation complexes.
This map induces an application between the homogeneous coordinate rings
   \begin{eqnarray*}
\kk[T_1,T_2,T_3,T_4] & \xrightarrow{h} &  A\\
T_i & \mapsto & g_i(X_0,\ldots,X_m)
\end{eqnarray*}
where $A=\kk[X_0,\ldots,X_m] / I(\Tc)$ is the homogeneous coordinate ring 
of $\Tc$, which is a domain, as $I(\Tc)$ is prime. Note that the variables
$X_k$ correspond to monomials $s^i t^j$ and the ideal $I(\Tc)$ is the ideal of relations between these monomials.  
The implicit equation of $\Sc$ is a generator of the principal
ideal $\ker(h)$. We should remark that the toric ideals $I(\Tc)$ are very well understood and there exist highly efficient software systems to compute their Gr\"obner bases, for example \cite{4ti2}.\medskip

\noindent Instead of $\New'(f)$ we could actually have chosen any polygon $Q$ such that a multiple $d \cdot Q$, $d \in \NN$, contains $\New(f)$. In particular, we could choose $\New(f)$ itself, in which case the $g_i$ will become linear forms, compare \cite[Sect. 2]{KD06}. We will see in Section \ref{segreT} that $\New'(f)$ is always a better choice than $\New(f)$; for the moment let us just state that a smaller polygon leads to a less complicated coordinate ring but to a higher degree of the $g_i$ and that the advantages of the former outweigh the inconveniences of the latter. Intuitively, the surface $\Tc$ should be understood to be the smallest compactification of $(\AA^*)^2$ through which the map $f$ factorizes, so in a way it respects the geometry of the map best and is a natural candidate. However, we will see in Example \ref{interestingexample} that in some cases there are better choices than the canonical choice $\New'(f)$.

\subsection{The combinatorial structure of the ring $A$} \label{combinatorialstructure}
We can describe the ring $A$ in a more combinatorial way, which will enable us to study its properties in more detail. Let $C$ be the cone generated by the polytope $\New'(f)$. i.e. the rational cone over $\New'(f) \times {1} \subset \RR^3$. Then $C \cap \ZZ^3$
equals the union of $C_n$ for $n \in \NN$, where
$$C_n \, = \, \{(i,j,n) \: | \: (i,j) \in (n \cdot \New'(f)) \cap \ZZ^2  \} \subseteq \ZZ^3$$
which means that at each height $n$ we have a homothety of $\New'(f)$ by a factor of $n$. In particular, we  identify
$C_1$ with $\New'(f) \cap \ZZ^2 $.
Since we are dealing with polygons in dimension two, it holds that
\[(n \cdot \New'(f)) \cap \ZZ^2 \, = n \cdot (\New'(f) \cap \ZZ^2).\]
This property is called {\em normality}. We should note that these considerations are no longer true in higher dimensions. This is because in dimension $\geq 3$ there exist non-normal lattice polytopes \cite[Ex. 12.6]{MS}. In fact, the study of
normality of smooth lattice polytopes is a subject of current research  \cite{HHM07}

As an illustration, consider the following picture of the cone $C$:\medskip
\begin{center}
 \includegraphics{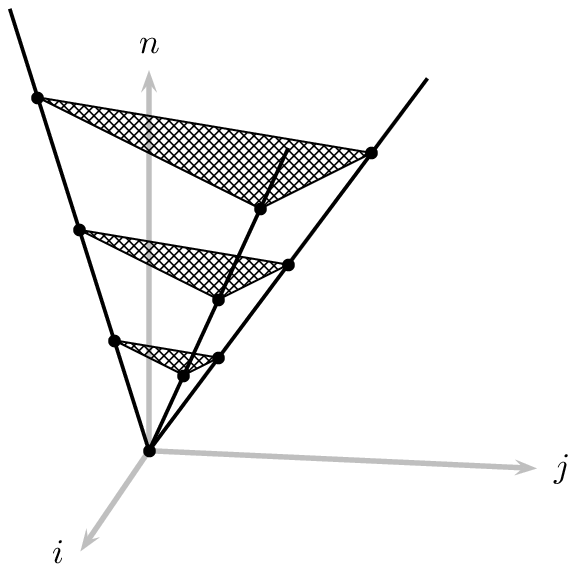}
%
%
%
%
%
%
\end{center}
\medskip
\noindent  Now we can associate an affine semigroup ring $\kk[C]$ to this cone: one takes the $\kk$-vector space freely generated by the elements of $C \cap \ZZ^3$ and equips it with a natural multiplication, which is induced by the addition of vectors in $\ZZ^3$, see \cite[Ch. 6]{BH} for more details. It is actually a graded $\kk$-algebra, with the grading being induced by the height $n$, i.e. by the decomposition $C \cap \ZZ^3 = \bigcup_n C_n$.
If the variable $X_k$ in $A$ stands for the monomial $s^i t^j$,  we can 
identify it with the point $(i,j,1) \in C \cap \ZZ^3$. The multiplication of two monomials in $A$ corresponds to the addition of two vectors in $C$. 

It is easy to verify that the above correspondence extends to a graded isomorphism of $\kk$-algebras between $A$ and $\kk[C]$ by observing that the relations of $I(\Tc)$ correspond to different decompositions of an element of $C_n$ as the sum of elements of smaller degree, so we actually have
 $$A \simeq \kk[C]$$ 
 Exploiting this combinatorial description of the ring $A$ we can deduce some algebraic properties, but let us first recall the definition of the canonical module, a notion we will use in this section.\medskip

\begin{defn} \label{canmoddef}
Let $R=\kk[X_1,\ldots,X_n]$, $I$ an ideal of $R$ and suppose that $M=R/I$ is of dimension $d$. Then the canonical module of $R$ is defined as $\omega_R=R[-n]$ and
$$\omega_M = \Ext^{n-d}_R(M,R[-n])$$ is the canonical module $\omega_M$ of $M$.
\end{defn}

\medskip
\noindent The ring $A$ is an affine normal semigroup ring by \cite[Prop. 6.1.2 and 6.1.4]{BH}, since $C \cap \ZZ^3$ 
is a normal semigroup. Moreover, by \cite[Prop. 6.3.5]{BH} it is Cohen-Macaulay and its canonical module $\omega_A$ is the ideal generated by the monomials
that correspond to integer points in the interior of $C$. This shows that $A$ is Gorenstein if and only if the first $C_i$ with non-empty interior (either $i=1$, $i=2$, or $i=3$) contains exactly one point. In this case, it is actually easy to see the isomorphism between $\omega_A$ and $A$ geometrically: It is nothing else than the translation that moves this point in the interior of $C_i$ to the origin. Note that in the previous works \cite{BuJo03}, \cite{MR2172855}, and \cite{BD07}, the ring $A$ was always Gorenstein and this property was used in some of the proofs. In our context we have to do without this property, which means that some of the proofs need to be modified.

\section{Homological tools}\label{approxT}

\subsection{Overview of approximation complexes}
We will quickly recall the construction of the approximation complex $\Zc_\bullet$ 
in order to fix notation, compare also \cite{HSV}, \cite{Va94}, and \cite{BuJo03}. 

Let us denote by $X_i$ the class of the variable in the homogeneous coordinate ring $A=\kk[\underline{X}]/J$ of $\Tc$, where $J=I(\Tc)$ and $\underline{X}$ stands for the sequence $X_0,\ldots,X_m$. $A$ is a graded ring, each variable having weight 1. Let $I=(g_1,g_2,g_3,g_4) \subset A$ be the ideal generated by the $g_i$, recall that $d=\deg(g_i)$. We consider the Koszul complex
$(K_\bullet(\underline{g},A),\delta_\bullet)$ associated to
$g_1,\ldots,g_4$ over $A$
$$\xymatrix@C=0.8pc{  A[-4d] \ar[r]^-{\delta_4} & A[-3d]^4 \ar[r]^-{\delta_3} & A[-2d]^6 \ar[r]^-{\delta_2} & A[-d]^4 \ar[r]^-{\delta_1} & A  }$$
where the differentials are matrices with $\pm g_1,\ldots, \pm g_4$
as non-zero entries. 
Write $K_0= A$, $K_1= A^{4}[-d]$, $K_2= A^{6}[-2d]$, $K_3= A^{4}[-3d]$, and $K_4= A[-4d]$. Set $Z_i:= \ker(\delta_i)\subset K_i$, which says that $Z_i$ also keeps the degree shift. Note that with this notation the sequence 
\begin{equation}\label{sesZKB}
 0\to Z_i\to K_i\to B_{i-1}\to 0
\end{equation}
is an exact sequence of graded modules (with morphisms of degree zero).

We set $\Zc_i= Z_i[i \cdot d] \otimes_A
A[\underline{T}]$, which we will consider as bigraded
$A[\underline{T}]$-modules (one grading is induced by the grading
of $A$, the other one comes from setting $\deg(T_i)=1$ for all
$i$). Now the approximation complex of cycles
$(\Zc_\bullet(\underline{g},A),\epsilon_\bullet)$, or simply
$\Zc_\bullet$, is the complex $$\xymatrix@C=0.8pc{ 0 \ar[r] &
\Zc_3(-3) \ar[r]^-{\epsilon_3} & \Zc_2(-2) \ar[r]^-{\epsilon_2} & \Zc_1(-1)
\ar[r]^-{\epsilon_1} & \Zc_0 }$$ where the differentials $\epsilon_\bullet$ are
obtained by replacing $g_i$ by $T_i$ for all $i$ in the matrices
of $\delta_\bullet$ and where the degree shifts are with respect to the grading
by the $T_i$. Then $\im(\epsilon_1)$ is generated by
the linear syzygies of the $g_i$ and 
$$H_0(\Zc_\bullet) = A[\underline{T}]/\im(\epsilon_1) \simeq \Sym_A(I)$$

\noindent From now on, when we take the degree
$\nu$ part  of the approximation complex, denoted
$(\Zc_\bullet)_\nu$, it should always be understood to be taken
with respect to the grading of $A$. Hereafter we denote by
$\mm$ the maximal ideal $(X_0,\ldots,X_m) \subset A$.\medskip

\noindent The geometric intuition behind the $\Zc$-complex is quite profound, 
we only give some hints and refer to \cite[Sect. 3]{Ch06} or \cite{Va94} for a more thorough treatment of the subject. The symmetric algebra is closely 
related to the Rees algebra $\mathrm{Rees}_A(I)$, which can be defined as
the quotient of $A[\underline{T}]$ by all the syzygies (not only the linear ones).
One has thus a canonical surjection from $\Sym_A(I)$ onto $\mathrm{Rees}_A(I)$, which induces an inclusion \begin{equation}\mathrm{Biproj}(\mathrm{Rees}_A(I)) \hookrightarrow \mathrm{Biproj}(\Sym_A(I)) \label{SymRees}\end{equation}
Now $\mathrm{Biproj}(\mathrm{Rees}_A(I))$ corresponds to the closure of the graph of the map $g$ and its image by the projection to $\PP^3$ equals the surface $\Sc$, while $\mathrm{Biproj}(\Sym_A(I))$ is a priori a bigger object. However, $\Sym_A(I)$ is in some ways easier to study and
under suitable conditions on the base points the inclusion in \eqref{SymRees} becomes an isomorphism and one can retrieve the information about $\Sc$ contained in the Rees algebra from the symmetric algebra. More precisely, we will see that the implicit equation of $\Sc$ can be obtained from the determinant of certain graded parts of the $\Zc$-complex. 

The next lemma shows that the complex $\Zc_\bullet(g_1,\ldots,g_4;A)$ is acyclic if the base points are local complete intersections and finite in number. This is a standard hypothesis for syzygy-based implicitization methods, see \cite{KD06}.\medskip

\begin{lem}
 \label{ZacyclicityT} Let $I=(g_1,g_2,g_3,g_4) \subset A$.
Suppose that $\Pc :=\Proj (A/I) \subset \Tc$ has at most dimension $0$ and is locally a complete intersection, then the complex ${\Zc}_{\bullet}$ is acyclic.
\end{lem}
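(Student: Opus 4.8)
To prove Lemma~\ref{ZacyclicityT}, the plan is to check acyclicity locally on $\mathrm{Spec}\,A$, the only delicate point being the behaviour at the irrelevant maximal ideal $\mm=(X_0,\dots,X_m)$, where the hypothesis on the base locus provides no information and has to be replaced by the Cohen-Macaulayness of $A$ established in Section~\ref{combinatorialstructure}. Since $A[\underline{T}]$ is flat over $A$ and localisation is exact, for every prime $\pp\subset A$ one has $H_i(\Zc_\bullet)_\pp\simeq H_i(\Zc_\bullet(\underline{g};A_\pp))$. If $\pp\not\supseteq I$ then $I_\pp=A_\pp$ and this localised complex is acyclic (cf.\ \cite{HSV,Va94}); if $\pp\supseteq I$ and $\pp\neq\mm$, then $\pp$ corresponds to a base point, and the hypothesis --- being stable under localisation --- gives that $I_\pp$ is generated by an $A_\pp$-regular sequence, so that again $\Zc_\bullet(\underline{g};A_\pp)$ is acyclic (cf.\ \cite{HSV,Va94}). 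Consequently the homology modules $H_i(\Zc_\bullet)$ with $i\geq1$ are supported only at $\mm$, and it remains to show that they vanish.

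For this I would argue degree by degree with respect to the grading by the $T_i$ and invoke the graded form of the Acyclicity Lemma of Peskine and Szpiro. In a fixed $T$-degree $k$, the corresponding part of $\Zc_\bullet$ is, up to shifts in the grading of $A$, the complex of finitely generated graded $A$-modules (with $Z_0=A$)
\[ 0 \longrightarrow Z_3^{\binom{k}{3}} \longrightarrow Z_2^{\binom{k+1}{3}} \longrightarrow Z_1^{\binom{k+2}{3}} \longrightarrow Z_0^{\binom{k+3}{3}} \longrightarrow 0 , \]
whose $i$-th homology is the degree-$k$ component of $H_i(\Zc_\bullet)$ and is therefore $\mm$-torsion for $i\geq1$ by the previous paragraph. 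It then suffices to verify that $\depth_\mm Z_i\geq i$ for $i=1,2,3$.

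This last point is a routine depth chase. Recall that $A$ is Cohen-Macaulay of dimension $3$; since $\Pc=\Proj(A/I)$ has dimension at most $0$ we have $\dim(A/I)\leq1$, hence $\mathrm{grade}(I)=\mathrm{ht}(I)\geq2$, so $H_i(\underline{g};A)=0$ for $i\geq3$. In particular $\delta_4$ is injective and $Z_3$ is a twist of $A$, with $\depth_\mm Z_3=3$. Combining the exact sequences \eqref{sesZKB} with the sequences $0\to B_{i-1}\to Z_{i-1}\to H_{i-1}(\underline{g};A)\to0$ and the standard depth estimate for short exact sequences, and using only $\depth_\mm(A/I)\geq0$, one obtains in turn $\depth_\mm I\geq1$, $\depth_\mm Z_1\geq2$, $\depth_\mm B_1\geq1$ and $\depth_\mm Z_2\geq2$. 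Thus $\depth_\mm Z_i\geq i$ for $i=1,2,3$, the Acyclicity Lemma applies in every $T$-degree, hence $H_i(\Zc_\bullet)=0$ for $i\geq1$ and $\Zc_\bullet$ is acyclic.

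The genuine obstacle is the last step: the localisation argument and the complete-intersection case are classical, but in the earlier works over $\PP^2$ and $\PP^1\times\PP^1$ one could appeal to $A$ being Gorenstein in order to control the homology at $\mm$, and here this must be done directly from $A$ being Cohen-Macaulay together with $\mathrm{ht}(I)\geq2$. The point to be careful about is that $A/I$ need not be Cohen-Macaulay --- it may have $\mm$ as an embedded prime, because ``locally a complete intersection'' is only assumed at the base points --- so the depth chase is allowed to use nothing beyond $\depth_\mm(A/I)\geq0$; this suffices precisely because, after reducing to a single $T$-degree, the complex has only four terms.
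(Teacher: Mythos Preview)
Your proof is correct, and it is essentially the same argument that sits behind the paper's proof: the paper simply invokes \cite[Props.~4.7 and 4.9]{BuJo03} after checking that $A$ is Cohen--Macaulay of dimension $3$ and that $\depth_I(A)=\codim(I)=2$, whereas you unpack those propositions directly via localisation plus the Peskine--Szpiro Acyclicity Lemma and the depth chase $\depth_\mm Z_i\geq i$. The only point worth correcting is your closing remark: the acyclicity criterion in \cite{BuJo03} already requires only Cohen--Macaulayness of $A$, not the Gorenstein property, so there is in fact no new obstacle here compared to the $\PP^2$ or $\PP^1\times\PP^1$ cases --- the Gorenstein hypothesis in the earlier papers was used elsewhere (for the local-cohomology bounds in Theorem~\ref{locosymalgT}), not for this lemma.
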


\begin{proof}
If there are no base points this follows immediately from \cite[Prop. 4.7]{BuJo03} and for finitely many base points from \cite[Prop. 4.9]{BuJo03}. We only have to check that the hypotheses of these propositions are verified: In our case, we have $n=4$ and we need to check that $\dim(A)=\depth_{\mm}(A)=n-1=3$, which is true because $A$ is Cohen-Macaulay  and because $A$ is the homogeneous coordinate ring of a (projective) surface. Moreover, in the presence of base points, the equality $\depth_I(A)=\codim(I)=2=n-2$ is again a consequence of the Cohen-Macaulayness of $A$.
\end{proof}

\begin{rem}
It can be shown in a similar way as in \cite[Lemma 1]{BD07} that the
$\Zc$-complex is still acyclic if the base points are almost local complete intersections, but we will not treat this case here. 
\end{rem}

\subsection{Bounds on local cohomology}

The following lemma establishes a vanishing criterion on the local cohomology of
$\Sym_A(I)$, which ensures that the implicit equation can be obtained as a generator of the annihilator of the symmetric algebra in a certain degree. We refer to \cite{BS98} for more details on local cohomology, a detailed treatment of which is beyond the scope of this work.\medskip

\begin{lem}\label{annihT}
Suppose that $\Pc:=\Proj(A/I) \subset \Tc$ has at most dimension $0$ and is
locally a complete intersection. If $\eta$ is an integer such that
$$H^0_\mathfrak{m} ( \Sym_A(I) )_\nu =0 \  \text{ for all  } \nu
\geq \eta$$ then we have
$$\ann_{\kk[\underline{T}]} ( \Sym_A(I)_\nu )=\ann_{\kk[\underline{T}]} ( \Sym_A(I)_\eta ) = \ker(h)$$ for all $\nu \geq \eta$.
\end{lem}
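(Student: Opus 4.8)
The plan is to compare $\Sym_A(I)$ with the Rees algebra $\Rees_A(I)$ degree by degree and to read off the annihilator from the latter. Write $\Kc$ for the kernel of the canonical surjection $\Sym_A(I) \twoheadrightarrow \Rees_A(I)$, a bigraded $A[\underline{T}]$-module. \textbf{Step 1.} The main point is to prove that $\Kc$ is $\mm$-torsion, i.e.\ that $\Sym_A(I)$ and $\Rees_A(I)$ agree on the punctured spectrum of $A$. Let $\pp$ be a graded prime of $A$ with $\pp \neq \mm$. If $I \not\subseteq \pp$, then $I_\pp = A_\pp$ and both localized algebras are polynomial rings over $A_\pp$, so $\Kc_\pp = 0$. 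Otherwise $I \subseteq \pp$; since $\Pc = \Proj(A/I)$ has dimension $0$, the only graded primes of $A$ containing $I$ other than $\mm$ are the (finitely many, possibly zero) minimal primes of $I$, which have codimension $2$ because $A$ is a finitely generated domain over $\kk$. By hypothesis $\Pc$ is locally a complete intersection, so $I_\pp$ is generated by a regular sequence of length $2$, hence is of linear type, and again $\Sym_{A_\pp}(I_\pp) = \Rees_{A_\pp}(I_\pp)$, i.e.\ $\Kc_\pp = 0$. Each graded component $\Kc^{(n)}$ of $\Kc$ with respect to the $\underline{T}$-grading is a finitely generated graded $A$-module, so its support is determined by graded primes; thus $\Supp_A(\Kc^{(n)}) \subseteq \{\mm\}$, so $\Kc^{(n)}$ has finite length, and therefore $\Kc = \bigoplus_n \Kc^{(n)}$ is $\mm$-torsion.

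\textbf{Step 2.} Being $\mm$-torsion and a submodule of $\Sym_A(I)$, $\Kc$ is contained in $H^0_\mm(\Sym_A(I))$; hence $\Kc_\nu \subseteq H^0_\mm(\Sym_A(I))_\nu = 0$ for every $\nu \geq \eta$, so the surjection restricts to an isomorphism of $\kk[\underline{T}]$-modules $\Sym_A(I)_\nu \xrightarrow{\sim} \Rees_A(I)_\nu$ for such $\nu$, and in particular $\ann_{\kk[\underline{T}]}(\Sym_A(I)_\nu) = \ann_{\kk[\underline{T}]}(\Rees_A(I)_\nu)$. \textbf{Step 3.} It remains to identify $\ann_{\kk[\underline{T}]}(\Rees_A(I)_\nu)$ with $\ker(h)$. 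One inclusion is immediate, since $\ker(h) = \ker(\kk[\underline{T}] \to \Rees_A(I))$ annihilates all of $\Rees_A(I)$, hence $\Rees_A(I)_\nu$. For the reverse inclusion, note that in the grading coming from the approximation complex the $T_i$ have $A$-degree $0$, so the $\underline{T}$-degree-$0$ component of $\Rees_A(I)_\nu$ is $A_\nu$, on which a form $P \in \kk[\underline{T}]$ of degree $n$ acts by multiplication by $h(P) \in A$; since $\ann_{\kk[\underline{T}]}(\Rees_A(I)_\nu)$ is homogeneous and $A$ is a domain with $A_\nu \neq 0$, a homogeneous $P$ in this annihilator must satisfy $h(P) = 0$, i.e.\ $P \in \ker(h)$. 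Combining the three steps gives $\ann_{\kk[\underline{T}]}(\Sym_A(I)_\nu) = \ker(h)$ for all $\nu \geq \eta$; taking $\nu = \eta$ yields in addition $\ann_{\kk[\underline{T}]}(\Sym_A(I)_\eta) = \ker(h)$, and the asserted chain of equalities follows.

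I expect the only delicate part to be Step 1, the vanishing of $\Kc$ on the punctured spectrum: it is precisely here that both hypotheses are used — finiteness of $\Pc$, to identify the relevant primes with the minimal primes of $I$, and the local complete intersection property, via the classical fact that ideals generated by a regular sequence are of linear type — and one must be slightly careful to reduce the support computation to graded primes. Steps 2 and 3 are formal. This comparison between symmetric and Rees algebras, together with the resulting computation of the annihilator, is already carried out in the homogeneous setting in \cite{BuJo03} and \cite{MR2172855}, and the argument goes through verbatim here, the only properties of $A$ that enter being that it is a Cohen--Macaulay domain.
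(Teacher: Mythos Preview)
Your proof is correct and follows precisely the approach the paper invokes: the paper's own proof consists of the single line ``The proof of \cite[Lemma 2]{BD07} can be applied verbatim,'' and what you have written is essentially a reconstruction of that argument---compare $\Sym_A(I)$ with $\Rees_A(I)$, show via the finite LCI hypothesis that the kernel is $\mm$-torsion, and then read off the annihilator from the Rees side using that $A$ is a domain. Your closing remark that only the Cohen--Macaulay domain property of $A$ is used (not Gorensteinness) is exactly the point that makes the citation to \cite{BD07} legitimate in this more general setting.
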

\begin{proof}
The proof of \cite[Lemma 2]{BD07} can be applied verbatim.
\end{proof} 

\noindent As we shall see, the annihilator in the above lemma can be computed as
the determinant (or MacRae invariant) of the complex $(\Zc_\bullet)_\eta$, so
we should give an explicit formula for the integer $\eta$, but we first need to study the local cohomology of $A$ using its combinatorial structure as a semigroup ring. The following definition is the same as \cite[Def. 11.15]{MS}.\medskip

\begin{defn} 
Let $M$ be a graded $A$-module. The Matlis dual $M^\vee$ of $M$ is the $A$-module defined by $$(M^\vee)_{-u} = \Hom{}_\kk(M_u,\kk),$$ the multiplication being the transpose. One has $(M^\vee)^\vee=M$ if all the graded parts $M_u$ of $M$ are finite-dimensional as $\kk$-vector spaces. 
\end{defn}
 \medskip
\begin{lem}
Let $M$ be a finitely generated graded $A$-module of dimension $r$. Then
$M$ is Cohen-Macaulay if and only if $H^i_\mm(M)=0$ for all $i \neq r$ and
$H^r_\mm(M) =\omega_M^\vee$ is the Matlis dual to $\omega_M$.
\end{lem}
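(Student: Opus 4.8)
The plan is to recognize this statement as the combination of two classical facts about finitely generated graded modules over a graded polynomial ring, so that the proof reduces to assembling the right references from \cite{BH} (or \cite{BS98}) and checking that the conventions match those of Definition~\ref{canmoddef}.

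First I would invoke Grothendieck's cohomological characterization of depth and dimension: for a finitely generated graded $A$-module $M$ one has
\[
\depth M = \min\{\, i \: | \: H^i_\mm(M) \neq 0 \,\}, \qquad \dim M = \max\{\, i \: | \: H^i_\mm(M) \neq 0 \,\}.
\]
In particular $H^i_\mm(M)=0$ for $i>r$ automatically, and $M$ is Cohen-Macaulay, i.e. $\depth M = \dim M = r$, if and only if in addition $H^i_\mm(M)=0$ for all $i<r$ — equivalently, for all $i\neq r$. This already settles both implications as far as the vanishing condition is concerned; for the ``if'' direction one only uses the vanishing together with the fact that $\dim M = r$ forces $H^r_\mm(M)\neq 0$.

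Next I would identify the top local cohomology module by graded local duality. Writing $A=R/J$ with $R=\kk[X_0,\ldots,X_m]$ and $\mm_R=(X_0,\ldots,X_m)$, local cohomology is insensitive to the radical of the defining ideal and unchanged under restriction of scalars along $R\to A$, so $H^i_\mm(M)\cong H^i_{\mm_R}(M)$ computed over the polynomial ring $R$ in $m+1$ variables, which is Cohen-Macaulay with graded canonical module $\omega_R=R[-m-1]$. Graded local duality over $R$ then furnishes a natural isomorphism of graded $A$-modules $H^i_{\mm_R}(M)\cong \bigl(\Ext^{\,m+1-i}_R(M,\omega_R)\bigr)^\vee$ for every $i$. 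Specializing to $i=r=\dim M$ and comparing with Definition~\ref{canmoddef} (with $n=m+1$ and $d=r$), the right-hand side is exactly $\omega_M^\vee$, whence $H^r_\mm(M)\cong\omega_M^\vee$ for every finitely generated graded $M$ of dimension $r$, in particular when $M$ is Cohen-Macaulay; since the graded pieces of $M$, and hence of $\omega_M$, are finite-dimensional over $\kk$, the relation $(\omega_M^\vee)^\vee=\omega_M$ shows that $\omega_M$ and $\omega_M^\vee$ determine each other.

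The only genuinely delicate points — and the ones I would take care to write out explicitly — are bookkeeping rather than conceptual: that the grading shift in $\omega_R=R[-m-1]$ and the degree conventions of the Matlis dual $(-)^\vee$ agree with those in the statement of graded local duality, and that the module $\Ext^{\,m+1-r}_R(M,\omega_R)$ occurring there coincides with the $\omega_M$ of Definition~\ref{canmoddef} (independence of the chosen presentation of $M$ over $R$ is standard but worth recording). I do not expect any serious obstacle beyond this alignment of conventions.
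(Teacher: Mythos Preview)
Your argument is correct: the vanishing characterization of Cohen--Macaulayness via Grothendieck's identification of $\depth$ and $\dim$ with the extremes of nonvanishing local cohomology, together with graded local duality over the polynomial ring $R=\kk[X_0,\ldots,X_m]$ to identify $H^r_\mm(M)$ with $\omega_M^\vee$, is exactly the standard route. The paper itself does not give an argument at all --- it simply cites \cite[Th.~13.37]{MS} --- so your proposal is strictly more detailed than what appears there; the content behind that citation is precisely the two ingredients you invoke, and your bookkeeping remarks about the shift in $\omega_R$ and the independence of presentation are the right checks to record.
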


\begin{proof}
This is \cite[Th. 13.37]{MS}.
\end{proof}

\noindent So the local cohomology of an $A$-module that is Cohen-Macaulay can be expressed in terms of its canonical module. Let us apply this to the $A$-module $A$. Using that $\dim(A)=3$ and that $A$ is Cohen-Macaulay we immediately deduce \medskip

\begin{cor}\label{locoofA}
The local cohomology of $A$ is 
\begin{equation*}
   H^i_\mm(A) = \left\{
    \begin{array}{lr}
      0  & \mathrm{if} \: i \neq 3 \\
      \omega_A^\vee &  \mathrm{if} \: i=3  \\
    \end{array}
\right.
\end{equation*} where $\omega_A^\vee$ is the Matlis dual to the canonical module $\omega_A$.
\end{cor}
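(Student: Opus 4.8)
The plan is to deduce Corollary~\ref{locoofA} directly from the preceding lemma (which is \cite[Th. 13.37]{MS}) applied to the module $M = A$ itself, together with the structural facts about $A$ already established in Section~\ref{combinatorialstructure}. The only things I need to feed into that lemma are: (i) $A$ is a finitely generated graded $A$-module, (ii) $A$ is Cohen--Macaulay, and (iii) $\dim(A) = 3$. Item (i) is trivial. Items (ii) and (iii) were recorded just before Definition~\ref{canmoddef}: $A \simeq \kk[C]$ is an affine normal semigroup ring, hence Cohen--Macaulay by \cite[Prop. 6.3.5]{BH}, and it is the homogeneous coordinate ring of the two-dimensional toric variety $\Tc$, so $\dim(A) = 2 + 1 = 3$.

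First I would set $r = \dim(A) = 3$ and invoke the lemma with $M = A$. Its conclusion has two parts. The vanishing part says $H^i_\mm(A) = 0$ for all $i \neq r = 3$, which is exactly the first line of the displayed formula in the corollary. The non-vanishing part says that in the top degree $i = r = 3$ we have $H^3_\mm(A) = \omega_A^\vee$, the Matlis dual of the canonical module $\omega_A$ (which for $M = A$ is just the canonical module of the ring, as in Definition~\ref{canmoddef}). This is precisely the second line. So the corollary is simply the specialization $M = A$ of the lemma, with the two numerical inputs $\dim(A) = 3$ and Cohen--Macaulayness plugged in; no further argument is needed.

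Since this is a routine specialization, there is essentially no obstacle: the only point requiring a word of justification is why $A$ satisfies the hypotheses of the lemma, and both the Cohen--Macaulay property and the dimension count have already been established earlier in the paper. If anything deserves emphasis, it is that the Cohen--Macaulayness is what collapses the local cohomology to a single nonzero module concentrated in the top cohomological degree, and that this single module is then identified combinatorially via $\omega_A$ — the ideal of $A$ generated by the monomials corresponding to interior lattice points of the cone $C$ — which is what will be used in the sequel when making the bound on local cohomology in Theorem~\ref{locosymalgT} explicit.
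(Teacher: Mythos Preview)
Your proposal is correct and is precisely the paper's own argument: the corollary is stated immediately after the sentence ``Using that $\dim(A)=3$ and that $A$ is Cohen-Macaulay we immediately deduce,'' so it is nothing more than the specialization $M=A$ of the preceding lemma. There is no additional content in the paper beyond what you have written.
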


\noindent So the third local cohomology module of $A$ is the only one that is non-zero. Actually, we do not need to know this module exactly; it is sufficient
to know in which graded parts it vanishes. \medskip

\begin{cor}\label{canmod-propT} Let $\alpha := \max \{ i \ | \ \mathrm{C_i} \ \mathrm{contains} \ \mathrm{no} \ \mathrm{interior} \ \mathrm{points}\}$ and let $\nu \in \ZZ$. Then we have $H^3_\mm(A)_\nu =0$ if $\nu \geq - \alpha$.
\end{cor}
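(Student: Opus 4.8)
The plan is to compute $H^3_\mm(A)$ explicitly as a graded module using the combinatorial description $A \simeq \kk[C]$ and the fact that $H^3_\mm(A) = \omega_A^\vee$ from Corollary~\ref{locoofA}. First I would recall from the discussion in Section~\ref{combinatorialstructure} that $\omega_A$ is, as a graded module, identified with the ideal generated by the monomials corresponding to the \emph{interior} lattice points of the cone $C$; concretely, its degree-$n$ graded piece $(\omega_A)_n$ is the $\kk$-vector space spanned by the lattice points in the relative interior of $C_n$. Dualizing, the graded piece $(\omega_A^\vee)_{-n} = \Hom_\kk((\omega_A)_n,\kk)$ is nonzero precisely when $C_n$ contains at least one interior lattice point. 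Hence $H^3_\mm(A)_\nu = (\omega_A^\vee)_\nu$ is nonzero only for $\nu = -n$ with $C_n$ having a nonempty interior.

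Next I would translate this into the bound stated in the corollary. By definition $\alpha = \max\{i \mid C_i \text{ has no interior points}\}$, so $C_i$ has no interior lattice points for all $i \le \alpha$ and $C_{\alpha+1}$ does have one (using that once a dilate of a two-dimensional polytope acquires an interior lattice point, all larger dilates do too, which follows from normality / the fact that $C_{n+1} \supseteq C_n + C_1$ and $C_1$ contains a vertex). Therefore $(\omega_A)_n = 0$ for $0 \le n \le \alpha$ and $(\omega_A)_n \ne 0$ for $n \ge \alpha + 1$, while $(\omega_A)_n = 0$ trivially for $n < 0$. Dualizing, $H^3_\mm(A)_\nu = (\omega_A^\vee)_\nu \ne 0$ forces $-\nu \ge \alpha+1$, i.e. $\nu \le -\alpha - 1 < -\alpha$. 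Contrapositively, if $\nu \ge -\alpha$ then $H^3_\mm(A)_\nu = 0$, which is exactly the claim.

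I expect the main subtlety — not so much an obstacle as a point requiring care — to be the precise bookkeeping of the grading shift under Matlis duality: one must be sure that $(\omega_A)_n$ pairs with $(\omega_A^\vee)_{-n}$ and not some shifted index, and that the ``height $n$'' grading on $\kk[C]$ matches the standard grading on $A$ with $\deg X_k = 1$ under the isomorphism $A \simeq \kk[C]$ (which sends $X_k$ to the point at height $1$). Both facts were essentially set up in Section~\ref{combinatorialstructure}, so the argument reduces to invoking them. I would also make explicit the monotonicity claim that the set of $n$ for which $C_n$ has an interior point is an up-set in $\NN$, so that $\alpha$ is well-defined and $(\omega_A)_n \ne 0 \iff n \ge \alpha+1$; this is where the two-dimensionality of $\New'(f)$ and the normality property observed earlier are used.
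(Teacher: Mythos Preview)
Your proposal is correct and follows essentially the same route as the paper: identify $H^3_\mm(A)_\nu$ with $\Hom_\kk((\omega_A)_{-\nu},\kk)$ via Corollary~\ref{locoofA} and the definition of the Matlis dual, then observe that $(\omega_A)_{-\nu}=0$ whenever $-\nu \le \alpha$ because $\omega_A$ is supported in degrees $\ge \alpha+1$. Your added remarks on the grading convention and on the monotonicity of ``$C_n$ has an interior point'' make explicit two points the paper takes for granted, but they do not change the argument.
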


\begin{proof} By Corollary \ref{locoofA} and the definition of the Matlis dual we have the identities
$$H^3_\mm(A)_\nu = (\omega_A^\vee)_\nu = \Hom{}_\kk((\omega_A)_{-\nu},\kk)$$
but the module $\omega_A$ is
generated by the elements in the interior of $C$, i.e. by elements of degree at least $\alpha +1$, so whenever $\nu \geq - \alpha$, it follows $(\omega_A)_{-\nu}=0$ and the modules in the above equation are all zero.\end{proof}

\noindent  We can now proceed to investigate the vanishing of the 0th local cohomology of the symmetric algebra. The proof is similar to the corresponding theorems \cite[5.5 and 5.10]{BuJo03} and \cite[Th. 1]{BD07}. We give two bounds, an explicit one, which always holds, and a lower but more complicated bound for the case when there are base points.
\medskip

\begin{thm} \label{locosymalgT}
Suppose that $\Pc:=\Proj(A/I) \subset \Tc$ has at most dimension $0$ and is
locally a complete intersection. Then
\[
 H^0_\mm(\Sym_A(I))_\nu=0 \qquad \forall \nu \geq \nu_0 = 2d-\alpha
\]
where $\alpha := \max \{ i \ | \ \mathrm{C_i} \ \mathrm{contains} \ \mathrm{no} \ \mathrm{interior} \ \mathrm{points}    \ \}$ as before. 
Moreover, if there is at least one base point, one even has
\[
 H^0_\mm(\Sym_A(I))_\nu=0 \qquad \forall \nu \geq \nu_0 = \max\{d-\alpha, 2d+1- \indeg(H^0_\mm(\omega_A/I.\omega_A))\}.
\]
\end{thm}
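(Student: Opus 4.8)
The plan is to mimic the proofs of \cite[Th.~5.5 and 5.10]{BuJo03} and \cite[Th.~1]{BD07}, replacing every use of the Gorenstein hypothesis on $A$ by the combinatorial control on $H^3_\mm(A)$ provided by Corollary~\ref{canmod-propT}. The starting point is the acyclicity of $\Zc_\bullet$ (Lemma~\ref{ZacyclicityT}), which lets us compute $H^0_\mm(\Sym_A(I))=H^0_\mm(H_0(\Zc_\bullet))$ by running the two spectral sequences associated to the double complex $C^\bullet_\mm(\Zc_\bullet)$, where $C^\bullet_\mm$ denotes the \v{C}ech complex computing local cohomology with respect to $\mm$. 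One spectral sequence has $H_0(\Zc_\bullet)=\Sym_A(I)$ (and its higher local cohomology) on a single row/column, so it converges to what we want; the other has as its $E_1$-terms the modules $H^j_\mm(\Zc_i)$, and since $\Zc_i=Z_i[id]\otimes_A A[\underline T]$ with $Z_i$ a (shifted) submodule of a free $A$-module, $H^j_\mm(\Zc_i)$ is built from $H^j_\mm(A)$-pieces — which by Corollary~\ref{locoofA} vanish except for $j=3$.

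Concretely, first I would establish the ``generic'' (always valid) bound. Feeding the short exact sequences $0\to Z_i\to K_i\to B_{i-1}\to 0$ and $0\to B_i\to Z_i\to H_i(K_\bullet)\to 0$ into local cohomology, and using that the $H_i(K_\bullet)$ are Koszul homology modules supported on $\Pc$ (hence of dimension $0$, so only $H^0_\mm$ survives and it sits in a controlled range of degrees determined by $d$ and the regularity of $A/I$), one bounds the degrees in which $H^j_\mm(\Zc_i)_\nu$ can be nonzero purely in terms of $d$ and $\alpha$. The key numerical input is Corollary~\ref{canmod-propT}: $H^3_\mm(A)_\nu=0$ for $\nu\geq-\alpha$, so after accounting for the shift $[id]$ in $\Zc_i$, the ``highest'' contribution $H^3_\mm(\Zc_i)_\nu$ vanishes for $\nu\ge id-\alpha$. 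Tracking these vanishings through the spectral sequence — the worst offender being $i=2$, contributing a term in degrees $<2d-\alpha$ — yields $H^0_\mm(\Sym_A(I))_\nu=0$ for $\nu\geq 2d-\alpha=\nu_0$.

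For the improved bound in the presence of a base point, the refinement is exactly the one in \cite[Th.~1]{BD07}: when $\Pc\neq\emptyset$, one exploits that $H^3_\mm(K_\bullet)$ (equivalently the top of the Koszul complex) interacts with $\omega_A$, and the relevant obstruction is not all of $H^3_\mm(A)$ but the cokernel piece governed by $\omega_A/I\cdot\omega_A$. Using the duality $H^3_\mm(-)=\omega_{(-)}^\vee$ from the cited lemma of \cite{MS}, the vanishing of the relevant graded strand is controlled by $\indeg(H^0_\mm(\omega_A/I.\omega_A))$, and combining this with the still-needed ``low'' bound $d-\alpha$ coming from the other end of the complex gives $\nu_0=\max\{d-\alpha,\ 2d+1-\indeg(H^0_\mm(\omega_A/I.\omega_A))\}$.

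The main obstacle is precisely where the Gorenstein hypothesis was silently doing work in \cite{BD07}: there $\omega_A\cong A$ (up to shift), so $H^3_\mm(A)$, $H^3_\mm(\Zc_i)$ and the module $\omega_A/I.\omega_A$ all reduce to statements about $A$ and $A/I$ directly, and degree bounds are read off immediately. Here $\omega_A$ is the (non-principal) monomial ideal of interior lattice points of $C$, so I must (i) re-derive the vanishing range of $H^3_\mm(\Zc_i)$ from $\omega_A$ rather than from $A$ — this is handled by Corollary~\ref{canmod-propT}, which is the crux and already in place — and (ii) make sure the spectral-sequence bookkeeping, in particular the possible nonzero differentials out of the $H^3_\mm(\Zc_i)$ terms, cannot resurrect a nonzero $H^0_\mm(\Sym_A(I))$ above $\nu_0$; this last point is routine degree-counting once the vanishing of the individual $E_1$ entries is known, and is where I would be most careful to get the constant ($2d-\alpha$ versus, say, $2d-\alpha+1$) exactly right.
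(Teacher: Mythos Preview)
Your plan matches the paper's proof: spectral-sequence comparison for $C^\bullet_\mm(\Zc_\bullet)$, with the diagonal entries $_1''E^p_p=H^p_\mm(Z_p)[pd]\otimes_A A[\underline T]$ controlling $\endd(H^0_\mm(\Sym_A(I)))$; one shows these vanish for $p=0,1$, the term $p=3$ contributes the bound $d-\alpha$ via $Z_3\cong A[-4d]$, and $p=2$ is the decisive case, exactly as you say.

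Two places to tighten. First, your claim that ``$H^j_\mm(\Zc_i)$ is built from $H^j_\mm(A)$-pieces'' and the bound ``$H^3_\mm(\Zc_i)_\nu=0$ for $\nu\ge id-\alpha$'' are statements about the \emph{free} modules $K_i$, not about $Z_i$; this is precisely why one must chase the short exact sequences you list, and why the $p=3$ contribution is $d-\alpha$ rather than $3d-\alpha$. Second, and more substantively, for the refined bound the duality you cite ($H^3_\mm(-)=\omega_{(-)}^\vee$, i.e.\ Matlis/local duality for $A$) is not by itself what links the $p=2$ obstruction to $\omega_A/I\omega_A$. The paper's argument is: $H^2_\mm(Z_2)$ is a quotient of $H^0_\mm(H_1)$ (via $H^1_\mm(B_1)$, using $H^1_\mm(K_2)=H^2_\mm(K_2)=0$ and $H^1_\mm(Z_1)=0$), and then one invokes a \emph{Koszul-homology duality} from \cite[Lemma~5.8]{Ch04}, namely $(H^0_\mm(H_1))^\star\cong H^0_\mm(H_0(\underline g;\omega_A))[4d]=H^0_\mm(\omega_A/I\omega_A)[4d]$, which turns $\endd(H^0_\mm(H_1))$ into $4d-\indeg(H^0_\mm(\omega_A/I\omega_A))$. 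That lemma --- essentially the self-duality of the Koszul complex twisted by the canonical module --- is the step that genuinely replaces the Gorenstein hypothesis, and your sketch does not yet name it.
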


\begin{proof} 
The proof is virtually the same for the two cases. As the first one has been proven in \cite[Th. 4.11]{Do08}, we only give a proof for the second bound.
Consider the two spectral sequences associated to the double complex $C^\bullet_\mm(\Zc_\bullet)$, both converging to the hypercohomology of $\Zc_\bullet$. 
By Lemma \ref{ZacyclicityT}, $\Zc_\bullet$ is acyclic, hence the first spectral sequence stabilizes at step two with
\[
_\infty'E^p_q =\ _2'E^p_q = H^p_\mm(H_q(\Zc_\bullet)) = \left\lbrace\begin{array}{ll}H^p_\mm(\Sym_A(I)) & \mbox{for }q=0, \\
							0 & \mbox{otherwise.} \end{array}\right.
\]
The second one has as first screen:
\[
 _1''E^p_q =\ H^p_\mm(Z_q)[qd]\otimes_A A[T_1,\ldots,T_4](-q).
\]

The comparison of the two spectral sequences shows that $H^0_\mm(\Sym_A(I))_\nu$ vanishes as soon as $(_1{''}E^{p}_p)_\nu$
vanishes for all $p$, in fact we have that 
\[
 \endd(H^0_\mm(\Sym_A(I)))\leq \max_{p\geq 0}\{\endd(_1{''}E^{p}_p)\}=\max_{p\geq 0}\{\endd(H^p_\mm(Z_p)-p \cdot d\}.
\] 
where we denote $\endd(M)= \max \{ \nu \ | \ M_\nu \neq 0   \ \}$.
By Corollary \ref{locoofA} and the fact that $Z_0\cong A$, $H^0_\mm(Z_0)=0$. Recall that the sequence
\begin{equation}\label{secZKB1}
 0 \to Z_{i+1} \to K_{i+1} \to B_i \to 0 
\end{equation}
shown in \eqref{sesZKB} is graded exact. From \eqref{secZKB1}, applied to $i=0$ (writing $B_0=I$) we obtain the long exact sequences of local cohomology 
\[
  \hdots \to H^0_\mm(I) \to H^1_\mm(Z_{1}) \to H^1_\mm(K_{1}) \to \hdots .
\]
Now $H^0_\mm(I)=0$, as $I$ is an ideal of an integral domain, by Corollary \ref{locoofA} we have $H^1_\mm(K_{1})=0$, hence $H^1_\mm(Z_{1})$ vanishes.

Now as $\depth_A(I) \geq 2$, the Koszul complex is exact for $i>4-2=2$, i.e. $B_i=Z_i$. It is clear by construction of the Koszul complex that $Z_4=0$ and that $B_3=\im(d_3)\simeq A[-d]$. Using that $H^3_\mathfrak{m}(A)_\nu =0$ for $\nu \geq - \alpha$ by Corollary \ref{canmod-propT}, we can deduce that $H^3_\mathfrak{m}(Z_3)_\nu=H^3_\mathfrak{m}(B_3)_\nu =0$ if $\nu \geq d-\alpha$. It follows that 
\[\endd(_1{''}E^{p}_p) \leq \left\lbrace\begin{array}{ll}
 -\infty& \mbox{ for }p=0,1, \mbox{ or } p>3\\
 \epsilon & \mbox{ for }p=2 \\
 d-\alpha-1	& \mbox{ for }p=3  
\end{array}\right.
\]
It remains to determine $\epsilon$. From the short exact sequence $0 \to B_{i} \to Z_{i} \to H_i \to 0$ we get the exact sequence 
\[
 H^0_\mm(Z_{1}) \to H^0_\mm(H_1) \to H^1_\mm(B_{1}) \to 0,
\]
hence, as $H^2_\mm(Z_2)\cong H^1_\mm(B_{1})$ by \eqref{secZKB1}, there is a surjective graded map $H^0_\mm(H_1)\twoheadrightarrow H^2_\mm(Z_2)$.

Moreover, setting $\hbox{---}^{\star}:=\Homgr_{A}(\hbox{---},A/\mm )$, by \cite[Lemma 5.8]{Ch04} we have the
graded isomorphism $(H^0_\mm(H_1))^{\star} \cong H^0_\mm(H_0(g_1,\hdots,g_4;\omega_A))[4d]\cong H^0_\mm(\omega_A/I.\omega_A)[4d]$. Hence, we obtain

\[
 \begin{array}{lll}
 \endd(_1''E^2_2) &= \endd(H^2_\mm(Z_2)[2d]) \\
                 &\leq \endd(H^0_\mm(H_1))-2d \\
		& =-\indeg(H^0_\mm(\omega_A/I.\omega_A)[4d])-2d \\
		& = 2d -\indeg(H^0_\mm(\omega_A/I.\omega_A)).
\end{array}
\]

We have shown that $\epsilon\leq 2d -\indeg(H^0_\mm(\omega_A/I.\omega_A))$, hence $H^0_\mm(\Sym_A(I))_\nu$ vanishes as soon as
$\nu \geq \nu_0:=\max\{d-\alpha, 2d+1- \indeg(H^0_\mm(\omega_A/I.\omega_A))\}$.
\end{proof}

\begin{rem} \label{naivebound}
Clearly, the advantage of the bound $\nu_0=2d-\alpha$ is that it does not require the computation of $H^0_\mm(\omega_A/I.\omega_A)$, which can turn out to be difficult even in simple examples. However, even though it might not be obvious at first 
sight, the second bound is lower. For example, take the case 
studied in \cite{BD07}, i.e. $\New'(f)$ is a unit square and $A$ is the quotient $\kk[X_0,X_1,X_2,X_3]/X_0X_3-X_1X_2$. By \cite[Prop. 2]{BD07}, we can identify $\omega_A \cong A[-4+2]$, hence $\nu_0 = 2d+1 -\indeg(H^0_\mm(A/I)[-2])=2d-1-\indeg(I^\sat)$, whereas the
naive bound would be $2d-\alpha =2d-1$. Similarly, in the case $\Tc=\PP^2$, our bound coincides with the known bound $\nu_0=2d-2-\indeg(I^\sat)$ from  \cite[Th. 3.2]{MR2172855},
as compared to $2d-2$. 

Also in the general case, one always has $2d+1- \indeg(H^0_\mm(\omega_A/I.\omega_A)) \leq 2d-\alpha$ due to $\omega_A$ being generated in degree at least $\alpha +1$  as explained in Section \ref{combinatorialstructure}, and obviously $d-\alpha < 2d-\alpha$.
\end{rem}

\section{The representation matrix}\label{sec:equation}

\noindent It can now be deduced that the 
determinant of the $\Zc_\bullet$-complex is a power of the implicit equation of $\Sc$. Indeed, using Lemma \ref{ZacyclicityT}, Lemma \ref{annihT}, and Theorem \ref{locosymalgT}, a completely analogous proof to \cite[Th. 5.2]{BuJo03} shows the following.\medskip

\begin{thm}\label{mainthT}
Suppose that $\Pc:=\Proj(A/I) \subset \Tc$ has at most dimension $0$ and is
locally a complete intersection. Let $\alpha := \max \{ i \ | \ \mathrm{C_i} \ \mathrm{contains} \ \mathrm{no} \ \mathrm{interior} \ \mathrm{points}\}$ as before and $\nu_0= 2d-\alpha$. For any integer $\nu \geq \nu_0$
the determinant $D$ of the complex $(\Zc_\bullet)_\nu$ of
$\kk[\underline{T}]$-modules defines (up to multiplication with a constant) the same non-zero
element in $\kk[\underline{T}]$ and
$$D=F^{\deg(g)}$$ where $F$ is the implicit equation
of $\Sc$. 
\end{thm}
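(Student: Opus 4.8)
The plan is to follow the blueprint of \cite[Th. 5.2]{BuJo03}, now that the three ingredients it requires are available in our setting: acyclicity of $\Zc_\bullet$ (Lemma~\ref{ZacyclicityT}), the characterization of $\ker(h)$ as the annihilator of $\Sym_A(I)$ in degree $\nu$ (Lemma~\ref{annihT}), and the vanishing $H^0_\mm(\Sym_A(I))_\nu=0$ for $\nu\ge\nu_0=2d-\alpha$ (Theorem~\ref{locosymalgT}). First I would recall that for an acyclic complex of free modules over a ring, or more generally a complex satisfying the rank and depth conditions of the Buchsbaum--Eisenbud acyclicity criterion, one can define its determinant (the MacRae invariant), which is a well-defined element of the total quotient ring up to units; here the relevant ambient ring is $\kk[\underline T]$, which is a UFD, so the determinant is a genuine polynomial up to a nonzero constant. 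Taking the degree-$\nu$ strand $(\Zc_\bullet)_\nu$ gives a finite complex of finitely generated free $\kk[\underline T]$-modules, so $D=\det((\Zc_\bullet)_\nu)$ is defined; the fact that it is nonzero and independent of $\nu\ge\nu_0$ up to a constant is exactly the content one transports from \cite[Th. 5.2]{BuJo03}.

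Next I would identify $D$ up to constants with a power of $F$. The key algebraic fact is that for a graded complex of free modules whose only nonzero homology is $H_0$, the $0$-th Fitting ideal / MacRae invariant of $H_0$ equals the determinant of the complex, and that $D$ is, up to a unit, a generator of the annihilator of $H_0((\Zc_\bullet)_\nu)=\Sym_A(I)_\nu$ as a $\kk[\underline T]$-module, provided $\Sym_A(I)_\nu$ is a torsion module of the expected codimension one. This is where the hypothesis $H^0_\mm(\Sym_A(I))_\nu=0$ enters: it guarantees, via Lemma~\ref{annihT}, that $\ann_{\kk[\underline T]}(\Sym_A(I)_\nu)=\ker(h)=(F)$ is prime of height one, so the support of $\Sym_A(I)_\nu$ is exactly the hypersurface $\Sc=V(F)\subset\PP^3$ and $\Sym_A(I)_\nu$ has no embedded or lower-dimensional components contributing spurious factors. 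Hence $D$ is a power of $F$ times a constant.

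To pin down the exponent as $\deg(g)$, I would localize at the generic point of $\Sc$: over $\kk(\Sc)$, the map $g$ has fiber degree $\deg(g)$, and a standard computation — the same multiplicity count as in \cite[Th. 5.2]{BuJo03} and \cite{BuJo03}'s treatment of the curve case — shows that the length of $(\Sym_A(I)_\nu)$ localized at $(F)$ equals $\deg(g)$, which is the order of vanishing of $D$ along $F$. Here the fact that $A$ is a domain (so that $\Rees_A(I)$ is integral and $\mathrm{Biproj}(\Rees_A(I))$ really is the closure of the graph of $g$) together with the $\Sym$-to-$\Rees$ comparison of \eqref{SymRees}, which becomes an isomorphism under the local complete intersection hypothesis on the base points, ensures that $\Sym_A(I)$ agrees with $\Rees_A(I)$ in the relevant degrees and that no extra multiplicity is introduced.

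The main obstacle I anticipate is purely bookkeeping: checking that every step of \cite[Th. 5.2]{BuJo03} that implicitly used the Gorenstein property of $A$ (as flagged in Section~\ref{combinatorialstructure}) goes through with only the Cohen--Macaulay property plus the explicit description of $\omega_A$ and of $H^\bullet_\mm(A)$ from Corollaries~\ref{locoofA} and~\ref{canmod-propT}. Concretely, one must verify that the depth/rank conditions on the free modules $(\Zc_i)_\nu$ needed to invoke the Buchsbaum--Eisenbud criterion and to define $D$ are still met — these follow from $\depth_I(A)=\codim(I)=2$ and $\dim A=\depth_\mm A=3$, already used in Lemma~\ref{ZacyclicityT} — and that the generic-fiber-length computation only uses that $A$ is a domain, not that it is Gorenstein. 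Once these checks are in place, the proof is a verbatim transcription of the argument in \cite{BuJo03}, substituting our $\nu_0$ and our vanishing results for theirs.
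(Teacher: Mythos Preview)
Your proposal is correct and follows exactly the same approach as the paper: the paper's proof is literally the single sentence ``using Lemma~\ref{ZacyclicityT}, Lemma~\ref{annihT}, and Theorem~\ref{locosymalgT}, a completely analogous proof to \cite[Th.~5.2]{BuJo03} shows the following,'' and your write-up is a faithful expansion of what that analogous proof entails. Your attention to the Gorenstein issue is well-placed but, as the paper notes just after the theorem, that obstruction only arises for the almost-LCI generalization, not for the LCI case stated here.
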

\medskip
\noindent By Theorem \ref{locosymalgT}, one can replace the bound in this result by the more precise bound $\nu_0=\max\{d-\alpha, 2d+1- \indeg(H^0_\mm(\omega_A/I.\omega_A))\}$ if there is at least one base point. As in \cite{Ch06} or \cite{BCJ06}, there is a possible generalization of the above theorem  to the case of almost local completion intersection base points. However, the proofs of the corresponding results (or the one of \cite[Th. 4]{MR2172855}) do not apply directly here, because they use at some points that $A$ is Gorenstein, which is not necessarily the case in the toric setting. \medskip

\noindent By \cite[Appendix A]{GKZ94}, the determinant $D$ can be computed either as an alternating sum of subdeterminants of the differentials in $\Zc_{\nu}$ or as the greatest common divisor of the maximal-size minors of the matrix $M$ associated to the
first map $(\Zc_1)_{\nu} \rightarrow  (\Zc_0)_{\nu}$. Note that this matrix is nothing else than the matrix $M_\nu$ of linear syzygies as described in the introduction; it can be computed with the  same algorithm as in \cite{BD07} by solving the linear system given by the
degree $\nu_0$ part of \eqref{syzygyequation}. As an immediate corollary we deduce the following very simple translation of Theorem \ref{mainthT}, which can be considered the
main result of this paper.\medskip

\begin{cor}\label{cor:main}
Let $\Tc  \stackrel{g}{\dashrightarrow}  \PP^3$ be a parametrization of the surface $\Sc \subset \PP^3$ given by $g=(g_1:g_2:g_3:g_4)$ with $g_i \in A$.
 Let $M_\nu$ be the matrix of linear syzygies of $g_1,\ldots,g_4$ in degree $\nu \geq 2d-\alpha$, i.e. the matrix of coefficients of a $\kk$-basis of $\Syz(g)_\nu$ with respect to a $\kk$-basis of $A_\nu$. If $g$ has only finitely many base points, which are local complete intersections, then $M_\nu$ is a representation matrix for the surface~$\Sc$.
\end{cor}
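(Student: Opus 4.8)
The plan is to derive Corollary~\ref{cor:main} as an essentially immediate consequence of Theorem~\ref{mainthT} together with the general theory of determinants of complexes. First I would recall the set-up: under the hypotheses (finitely many base points, locally a complete intersection), Lemma~\ref{ZacyclicityT} guarantees that $\Zc_\bullet$ is acyclic, and for any $\nu \geq \nu_0 = 2d-\alpha$ the graded piece $(\Zc_\bullet)_\nu$ is a finite free complex of $\kk[\underline{T}]$-modules whose determinant $D$ equals $F^{\deg(g)}$ by Theorem~\ref{mainthT}. The task is then purely to translate the statement ``$D = F^{\deg(g)}$'' into the statement ``$M_\nu$ is a representation matrix'' in the sense of the Definition in the introduction.

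The key steps are as follows. First, I would identify the matrix $M_\nu$ with the matrix of the first differential $(\Zc_1)_\nu \to (\Zc_0)_\nu$: by the description of the $\Zc$-complex in Section~\ref{approxT}, $\im(\epsilon_1)$ is generated by the linear syzygies of the $g_i$, so the map $(\Zc_1)_\nu \to (\Zc_0)_\nu = A_\nu \otimes \kk[\underline{T}]$ is represented, after choosing a $\kk$-basis of $\Syz(g)_\nu$ for the source and a $\kk$-basis of $A_\nu$ for the target, exactly by the matrix of coefficients $M_\nu$ described in the statement; this is the same matrix as the $M_\nu$ from the introduction and from \cite{BD07}. Second, I would invoke the property of the MacRae determinant recalled just before the corollary (citing \cite[Appendix A]{GKZ94}): for an acyclic complex of free modules over a factorial ring such as $\kk[\underline{T}]$, the determinant $D$ is, up to a unit, the greatest common divisor of the maximal-size minors of the matrix of the first map. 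Hence $\gcd$ of the maximal minors of $M_\nu$ equals $F^{\deg(g)}$ up to a nonzero constant. Third, I would conclude via the elementary remark made right after the Definition in the introduction: a matrix $M$ with polynomial entries is a representation matrix of a hypersurface $\Hc = V(F)$ if and only if $M$ is generically of full rank and the $\gcd$ of its maximal minors is a power of $F$. Since $F^{\deg(g)}$ is nonzero, the maximal minors do not all vanish, so $M_\nu$ has generically full rank; and the rank drops at a point $P \in \PP^3$ precisely when all maximal minors vanish at $P$, i.e. when $F^{\deg(g)}(P) = 0$, i.e. when $P \in \Sc$. This is exactly the defining condition for $M_\nu$ to be a representation matrix of $\Sc$.

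I do not expect a genuine obstacle here, since the corollary is designed to be a restatement of Theorem~\ref{mainthT}; the only point requiring a little care is the bookkeeping of degree shifts and the identification of the source of $\epsilon_1$ with $\Syz(g)_\nu$, so that the matrix produced by ``solving the linear system given by the degree-$\nu$ part of \eqref{syzygyequation}'' really is the matrix of the complex whose determinant Theorem~\ref{mainthT} controls. One should also note that when there are base points one may use the sharper bound $\nu_0 = \max\{d-\alpha,\, 2d+1-\indeg(H^0_\mm(\omega_A/I.\omega_A))\}$ from Theorem~\ref{locosymalgT} in place of $2d-\alpha$, giving a smaller matrix; the argument is otherwise unchanged. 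Thus the proof is a two-line deduction: apply Theorem~\ref{mainthT} to get $D = F^{\deg(g)}$, apply \cite[Appendix A]{GKZ94} to identify $D$ with the $\gcd$ of the maximal minors of $M_\nu$, and apply the characterization of representation matrices from the introduction.
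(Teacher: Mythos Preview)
Your proposal is correct and follows essentially the same route as the paper: the paper deduces the corollary immediately from Theorem~\ref{mainthT} by invoking \cite[Appendix A]{GKZ94} to identify the determinant $D$ with the gcd of the maximal minors of the matrix of the first map $(\Zc_1)_\nu \to (\Zc_0)_\nu$, observing that this matrix is precisely $M_\nu$, and then appealing to the characterization of representation matrices given right after the Definition in the introduction. Your write-up is actually slightly more detailed than the paper's (which treats it as an ``immediate corollary''), and your side remark about the sharper bound in the presence of base points mirrors the paper's own comment after Theorem~\ref{mainthT}.
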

\medskip
\noindent We should also remark that by \cite[Prop. 1]{KD06} (or \cite[Appendix]{Co01})
the degree of the surface $\Sc$ can be expressed in terms of the area of the Newton polytope and
the Hilbert-Samuel multiplicities of the base points:
\begin{equation}\label{deg2T}
\deg(g)\deg(\Sc)=\mathrm{Area}(\New(f))-\sum_{\pp \in V(g_1,\ldots,g_4)\subset \Tc} e_\pp
\end{equation}
where $\mathrm{Area}(\New(f))$ is twice the Euclidean area of $\New(f)$, i.e. the normalized area of the polygon. For locally complete intersections, the multiplicity  $e_\pp$  of the base point~$\pp$
is just the vector space dimension of the local quotient ring at~$\pp$.

\section{The special case $\Tc=\PP^1\times \PP^1$}\label{segreT}

\noindent Bihomogeneous parametrizations, i.e. the case $\Tc=\PP^1 \times \PP^1$, are particularly important in practical applications, so we will now make explicit the most important constructions in that case and make some refinements. We also include an implementation in Macaulay2 \cite{M2} in the Appendix.

In this section, we consider a rational parametrization of a surface~$\Sc$
\begin{eqnarray*}
 \PP^1 \times \PP^1 &  \stackrel{f}{\dashrightarrow}  & \PP^3 \\
(s:u) \times (t:v) & \mapsto & (f_1:f_2:f_3:f_4)(s,u,t,v)
\end{eqnarray*}
where the polynomials $f_1,\ldots,f_4$ are bihomogeneous of
bidegree $(e_1,e_2)$ with respect to the homogeneous variable pairs $(s:u)$ and
$(t:v)$, and $e_1,e_2$ are positive integers. We make the same assumptions as
in the general toric case. Let $d=\gcd(e_1,e_2)$, $e_1'=\frac{e_1}{d}$, and $e_2'=\frac{e_2}{d}$. So we assume that the Newton polytope $\New(f)$ is a rectangle of length $e_1$ and width $e_2$ and $\New'(f)$ is a rectangle of length $e_1'$ and width $e_2'$ (in fact $\New(f)$ might be smaller, but in this section we homogenize with respect to the whole rectangle).

So $\PP^1\times \PP^1$ can be embedded in $\PP^m$, $m=(e'_1+1)(e'_2+1)-1$ through the  \emph{Segre-Veronese embedding} $\rho=\rho_{e_1,e_2}$
\begin{eqnarray*}
 \PP^1\times \PP^1 &  \stackrel{\rho}{\hookrightarrow} & \PP^m \\
(s:u)\times(t:v) & \mapsto & (\ldots : s^i u^{e'_1-i} t^j v^{e'_2-j} : \ldots)
\end{eqnarray*}
We denote by $\Tc$ its image, which is an irreducible surface in $\PP^m$, whose ideal $J$ is generated by quadratic binomials. We have the following commutative diagram.
\begin{equation}
\xymatrix{
\PP^1\times \PP^1 \ar@{-->}[r]^-{f} \ar@{->}[d]^{\rho} & \PP^3 \\
\Tc \ar@{-->}[ur]_g }       \label{diagram1T}
\end{equation}
with $g =(g_1:\ldots:g_4)$, the $g_i$ being polynomials in the variables $X_0,\ldots,X_m$ of degree $d$. We denote by $A=\kk[X_0,\ldots,X_m]/J$ the homogeneous coordinate ring of $\Tc$.
We can give an alternative construction of the coordinate ring; consider the $\NN$-graded $\kk$-algebra
$$S:=\bigoplus_{n\in \NN} \left( \kk[s,u]_{n e_1'} \otimes_\kk \kk[t,v]_{n e_2'} \right)  \subset \kk[s,u,t,v]$$
which is finitely generated by $S_1$ as an $S_0$-algebra.  Then $\PP^1\times \PP^1$ is the bihomogeneous spectrum $\mathrm{Biproj}(S)$ of $S$, since $\Proj(\bigoplus_{n\in \NN} \kk[s,u]_{n e_1'})=\Proj(\bigoplus_{n\in \NN} \kk[t,v]_{n e_2'})=\PP^1$. The Segre-Veronese embedding $\rho$ induces an isomorphism of $\NN$-graded $\kk$-algebras
\begin{eqnarray*}
 A & \xrightarrow{\theta} & S \\
X^{i,j} & \mapsto & s^i u^{e_1'-i} t^j v^{e_2'-j} 
\end{eqnarray*}
where $X^{i,j}=X_{(e'_2+1)i+j}$ for $i=0,\ldots, e'_1$ and $j=0,\ldots,e'_2$ and the implicit equation of $\Sc$ can be obtained by the method of approximation complexes described in the previous sections as the kernel of the map
\begin{eqnarray*}
 \kk[T_1,\ldots,T_4] & \rightarrow & A \\
 T_i & \mapsto & g_i
\end{eqnarray*}
\noindent The ring $A$ is an affine normal semigroup ring and it is Cohen-Macaulay. It is Gorenstein if and only if $e_1'=e_2'=1$ (or equivalently $e_1=e_2$), which is the case treated in \cite{BD07}. The ideal $J$ is easier to describe than in the general toric case (compare \cite[6.2]{Sul06} for the case
$e'_2=2$).
The generators of $J$ can 
be described explicitly. 
Let $$A_i= \begin{pmatrix} X^{i,0} & \ldots & X^{i,e'_2-1} \\  X^{i,1} & \ldots & X^{i,e'_2} \end{pmatrix},$$
then  the ideal $J$ is generated by the $2$-minors of the $ 4 \times e'_1 e'_2$-matrix below built from the matrices $A_i$:
\begin{equation}\label{Jgens}
\begin{pmatrix} A_0 & \ldots & A_{e'_1-1}  \\  A_1  & \ldots & A_{e'_1} \end{pmatrix}.
\end{equation}

Let us also state the degree formula for this setting, which is a direct corollary of \eqref{deg2T}:
\begin{equation*}
\deg(g)\deg(\Sc)=2e_1e_2-\sum_{\pp \in V(g_1,\ldots,g_4)\subset \Tc} e_\pp
\end{equation*}
where as before $e_\pp$ is the multiplicity of the base point~$\pp$.

We have claimed before that it is better to choose the toric variety defined by $\New'(f)$ instead
of $\New(f)$. Let us now give some explanations why this is the case. As we have seen, a bihomogeneous parametrization
of bidegree $(e_1,e_2)$ gives rise to the toric variety $\Tc=\PP^1 \times \PP^1$ determined by a rectangle of length $e_1'$ and width $e_2'$, where $e_i'=\frac{e_i}{d}$, $d=\gcd(e_1,e_2)$, and whose coordinate ring can be described as
$$S:=\bigoplus_{n\in \NN} \left( \kk[s,u]_{n e_1'} \otimes_\kk \kk[t,v]_{n e_2'} \right)  \subset \kk[s,u,t,v]$$
Instead of this embedding of $\PP^1 \times \PP^1$ we could equally choose the embedding defined by $\New(f)$, i.e. a rectangle
of length $e_1$ and width $e_2$, in which case we obtain the following coordinate ring
$$\hat{S}:=\bigoplus_{n\in \NN} \left( \kk[s,u]_{n e_1} \otimes_\kk \kk[t,v]_{n e_2} \right)  \subset \kk[s,u,t,v]$$
It is clear that this ring also defines $\PP^1 \times \PP^1$ and we obviously have an isomorphism 
$$ \hat{S}_n \simeq S_{d \cdot n}$$
between the graded parts of the two rings, which means that the grading of $\hat S$ is coarser and contains
less information. It is easy to check that the above isomorphism induces an isomorphism between the corresponding
graded parts of the approximation complexes  $\Zc_\bullet$ corresponding to $S$ and $\hat{\Zc}_\bullet$ corresponding to $\hat{S}$,
namely
$$ \hat{\Zc}_\nu \simeq \Zc_{d \cdot \nu}$$
If the optimal bound  in Theorem \ref{mainthT} for the complex $\Zc$ is a multiple of $d$, i.e. $\nu_0=d \cdot \eta$,
then the optimal bound for $\hat \Zc$ is $\hat{\nu}_0=\eta$ and we obtain isomorphic complexes in these degrees and
the matrix sizes will be equal in both cases. If not, the optimal bound $\hat \nu_0$ is the smallest integer bigger than
$\frac{\nu_0}{d}$ and in this case, the vector spaces in $\hat \Zc_{\hat \nu_0}$ will be of higher dimension than their
counterparts in $\Zc_{\nu_0}$ and the matrices of the maps will be bigger. An example of this is given in the next section.

\section{Examples}\label{sec:examples}
\begin{exmp}
 We first treat some examples from \cite{KD06}. Example 10 in the cited paper, which could not be solved in a satisfactory manner in \cite{BD07}, is a surface parametrized by
\begin{eqnarray*}
 f_1 &=& (t+t^2)(s-1)^2+(1+st-s^2t)(t-1)^2 \\
f_2 &=& (-t-t^2)(s-1)^2+(-1+st+s^2t)(t-1)^2 \\
f_3 &=& (t-t^2)(s-1)^2+(-1-st+s^2t)(t-1)^2 \\
f_4 & =&(t+t^2)(s-1)^2+(-1-st-s^2t)(t-1)^2
\end{eqnarray*}

\noindent The Newton polytope $\New'(f)$ of this parametrization is \medskip
\medskip

\begin{center}
 \includegraphics{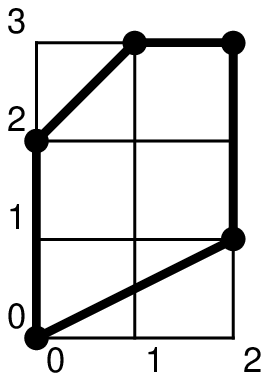}
\end{center}
\textbf{ }

\noindent We can compute the new parametrization over the associated variety,
which is given by linear forms $g_1,\ldots,g_4$, i.e. $d=1$ (since there is no smaller homothety $\New'(f)$
of $\New(f)$) and the coordinate ring is $A=\kk[X_0,\ldots,X_8]/J$ where $J$
is generated by $21$ binomials of degrees 2 and 3. Recall that the 9 variables correspond to the $9$ points in the Newton polytope. In the optimal degree $\nu_0 =1$ as in Theorem~\ref{locosymalgT}, the implicit equation of degree $5$ of the surface $\Sc$ 
is represented by a $9 \times 14$-matrix, compared to a $15 \times 15$-matrix  with the toric resultant method (from which a $11\times 11$-minor has to be computed) and a $5 \times 5$-matrix with the method of moving planes and quadrics. Note also that this is a major improvement of the method in \cite{BD07}, where  a $36\times 42$-matrix representation was computed for the
same example.
\end{exmp}
\medskip
\begin{exmp}
Example 11 of \cite{KD06} is similar to Example 10 but an additional term is added, which transforms
the point $(1,1)$ into a non-LCI base point. The parametrization is

{\footnotesize \begin{eqnarray*}
 f_1 &=& (t+t^2)(s-1)^2+(1+st-s^2t)(t-1)^2 +(t+st+st^2)(s-1)(t-1)\\
f_2 &=& (-t-t^2)(s-1)^2+(-1+st+s^2t)(t-1)^2 +(t+st+st^2)(s-1)(t-1)\\
f_3 &=& (t-t^2)(s-1)^2+(-1-st+s^2t)(t-1)^2+(t+st+st^2)(s-1)(t-1) \\
f_4 & =&(t+t^2)(s-1)^2+(-1-st-s^2t)(t-1)^2+(t+st+st^2)(s-1)(t-1)
\end{eqnarray*}  }

\noindent The Newton polytope has not changed, so the embedding as a toric variety and the coordinate ring $A$ are the same as
in the previous example. Again the new map is given by $g_1,\ldots,g_4$ of degree 1.

As in  \cite{KD06}, the method represents (with $\nu_0=1$) the implicit
equation of degree $5$ times a linear extraneous factor caused by the non-LCI base point. While the Chow form method represents this polynomial as a $12\times 12$-minor of a  $15\times 15$-matrix, our representation matrix is $9 \times 13$. Note that in this case, the method of moving lines and quadrics fails. 
\end{exmp}
\medskip
\begin{exmp} \label{indegfalse}
In this example, we will see that if the ring $A$ is not Gorenstein, the correction term for $\nu_0$ is different from  $\indeg(I^\sat)$, unlike in the homogeneous and the unmixed bihomogeneous cases. Consider the parametrization
\begin{eqnarray*}
f_1 &=& (s^2+t^2)t^6s^4+(1+s^3t^4-s^4t^4)(t-1)^5(s^2-1) \\
f_2 &=& (-s^2-t^2)t^6s^4+(-1+s^3t^4+s^4t^4)(t-1)^5(s^2-1) \\
f_3 &=& (s^2-t^2)t^6s^4+(-1-s^3t^4+s^4t^4)(t-1)^5(s^2-1) \\
f_4 &=& (s^2+t^2)t^6s^4+(-1-s^3t^4-s^4t^4)(t-1)^5(s^2-1)
\end{eqnarray*}  
We will consider this as a bihomogeneous parametrization of bidegree $(6,9)$, that is we will choose the embedding $\rho$ corresponding to a rectangle of length 2 and width 3. The actual Newton polytope $\New(f)$ is smaller than the $(6,9)$-rectangle, but does not allow a smaller homothety. One obtains $A=\kk[X_0,\ldots,X_{11}] /J$, where $J$ is generated by 43 quadratic binomials and the associated $g_i$ are of degree $d=3$. It turns out that $\nu_0=4$ is the lowest degree  such that the implicit equation of degree 46 is represented as determinant of $\Zc_{\nu_0}$, the matrix of the first map being of size $117 \times 200$. So we cannot compute $\nu_0$ as $2d-\indeg(I^\sat)=6-3=3$, as one might have been tempted to conjecture based on the results of the homogeneous case. This is of course due to $A$ not being Gorenstein, since the rectangle contains two interior points.\medskip

\noindent Let us make a remark on the computation of the 
representation matrix. It turns out that this is highly efficient. Even if we choose the non-optimal bound $\nu=6$ as given in Theorem \ref{mainthT}, the computation of the $247 \times 518$ representation matrix is computed instantaneously in Macaulay2. Just to give an idea of what happens if we take higher degrees: For $\nu=30$ a $5551 \times 15566$-matrix is computed in about 30 seconds, and for $\nu=50$ we need slightly less than 5 minutes to compute a $15251 \times 43946$ matrix. 

In any case, the computation of the matrix is relatively cheap and the main interest in lowering the bound $\nu_0$ as much as possible is the reduction of the size of the matrix, not the time of its computation. This reduction improves the performance
of algorithmic applications of our approach,  notably to decide whether a given point lies in the parametrized surface.

\end{exmp}
\medskip
\begin{exmp} \label{interestingexample}
In the previous example, we did not fully exploit the structure of $\New(f)$ and chose a bigger polygon for the embedding. Here is an example where this is necessary to represent the implicit equation without extraneous factors. Take $(f_1,f_2,f_3,f_4) = (st^6+2,st^5-3st^3,st^4+5s^2t^6,2+s^2t^6)$. This is a very sparse parametrization and we have $\New(f)=\New'(f)$. The coordinate ring is $A=\kk[X_0,\ldots,X_5]/J$, where $J=(X_3^2-X_2X_4, X_2X_3-X_1X_4, X_2^2-X_1X_3, X_1^2-X_0X_5)$ and the new base-point-free parametrization $g$ is given by $(g_1,g_2,g_3,g_4)=(2X_0+X_4,-3X_1+X_3, X_2+5X_5, 2X_0+X_5)$. The Newton polytope looks as follows.
\begin{center}
  \includegraphics{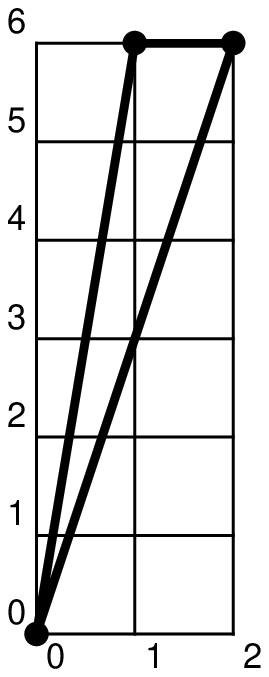}
\textbf{ }\\

$\New(f)$
\textbf{ }\\ 
\textbf{ }\\
\end{center}

\noindent For $\nu_0=2d=2$ we can compute the matrix of the first map of $(\Zc_\bullet)_{\nu_0}$, which is a $17 \times 34$-matrix. 
The greatest common divisor of the $17$-minors of this matrix is the homogeneous implicit equation of the surface; it is of degree 6 in the variables $T_1,\ldots,T_4$:
\begin{eqnarray*}
 & & 2809T_1^2T_2^4 + 124002T_2^6 - 5618T_1^3T_2^2T_3 + 66816T_1T_2^4T_3 +
2809T_1^4T_3^2\\
& &- 50580T_1^2T_2^2T_3^2  + 86976T_2^4 T_3^2 + 212T_1^3T_3^3  - 14210T_1T_2^2T_3^3  + 3078T_1^2 T_3^4 \\
& & + 13632T_2^2 T_3^4  + 116T_1T_3^5 + 841T_3^6  + 14045T_1^3 T_2^2 T_4 - 169849T_1T_2^4 T_4 \\
& & -14045T_1^4 T_3T_4 + 261327T_1^2 T_2^2 T_3T_4 - 468288T_2^4 T_3T_4 - 7208T_1^3 T_3^2 T_4 \\
& & + 157155T_1T_2^2 T_3^3 T_4 - 31098T_1^2 T_3^3 T_4 - 129215T_2^2 T_3^3 T_4 - 4528T_1T_3^4 T_4  \\
& & - 12673T_3^5 T_4 - 16695T_1^2 T_2^2 T_4^2  + 169600T_2^4 T_4^2  +
30740T_1^3 T_3T_4^2 \\
& & - 433384T_1T_2^2 T_3T_4^2 + 82434T_1^2 T_3^2 T_4^2  + 269745T_2^2 T_3^2 T_4^2  + 36696T_1T_3^3 T_4^2 \\
& &  + 63946T_3^4 T_4^2  + 2775T_1T_2^2 T_4^3  - 19470T_1^2 T_3T_4^4  + 177675T_2^2 T_3T_4^3  \\ 
& & - 85360T_1T_3^2 T_4^3  - 109490T_3^3 T_4^3  - 125T_2^2 T_4^4  + 2900T_1T_3T_4^4   \\
& & + 7325T_3^2 T_4^4  - 125T_3T_4^5 
\end{eqnarray*}

\noindent As in Example \ref{indegfalse} we could have considered the parametrization as a bihomogeneous map either of bidegree $(2,6)$ or of
bidegree $(1,3)$, i.e. we could have chosen the corresponding rectangles instead
of $\New(f)$. This leads to more complicated coordinate rings 
($20$ resp. $7$ variables and $160$ resp. $15$ generators of $J$) and to bigger matrices
(of size $21 \times 34$ in both cases). Even more importantly, the parametrizations will have a non-LCI base point and the matrices do not represent the implicit equation but a multiple of it (of degree $9$). Instead, if we consider the map as a homogeneous map of degree $8$, the results are even worse: For $\nu_0 = 6$, the $28 \times 35$-matrix $M_{\nu_0}$ represents a multiple of the implicit equation of degree $21$.

To sum up, in this example the toric version of the method of approximation complexes works well, whereas it fails over $\PP^1 \times \PP^1$ and $\PP^2$. This shows that the extension of the method to toric varieties really is a generalization and makes the method applicable to a larger class of parametrizations.\medskip

\noindent Interestingly, we can even do better than with $\New(f)$ by choosing a smaller polytope. The philosophy is that the choice of the optimal polytope is a  compromise between two criteria:
\begin{itemize}
 \item The polytope should be as simple as possible in order to avoid that the ring $A$ becomes too complicated.
\item The polytope should respect the sparseness of the parametrization (i.e. be close to the Newton polytope) so that no base points appear which are not local complete intersections.
\end{itemize}

\noindent So let us repeat the same example with another polytope $Q$, which is small enough to reduce the size of the matrix but which only adds well-behaved (i.e. local complete intersection) base points:
\begin{center}
\includegraphics{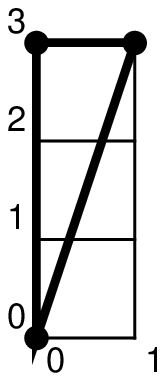}
\end{center}
 The Newton polytope $\New(f)$ is contained in $2 \cdot Q$, so the parametrization
will factor through the toric variety associated to $Q$, more precisely we obtain
a new parametrization defined by
 $$(g_1,g_2,g_3,g_4)=(2X_0^2+X_3X_4,-3X_0X_4+X_2X_4, X_1X_4+5X_4^2,2X_0^2+X_4^2)$$
over the coordinate ring $A=\kk[X_0,\ldots,X_4]/J$ with $J=(X_2^2-X_1X_3, X_1X_2-X_0X_3, X_1^2-X_0X_2)$. The optimal bound is $\nu_0=2$ and in this degree the implicit equation is represented directly without extraneous factors by a $12 \times 19$-matrix, which is smaller than the $17 \times 34$ we had before. 
\end{exmp}

\medskip

\begin{exmp}
As we have seen, the size of the matrix representation depends on the given
parametrization and as a preconditioning step it is often advantageous to choose a simpler parametrization of the same surface, if that is possible. For example, approaches such as \cite{Sc03} can be used to find a simpler reparametrization of the given surface and optimize the presented methods.

 Another important factor to consider is that all the methods we have seen
represent the implicit equation to the power of the degree of the parametrization. On one hand, it can be seen as an advantage that this piece of geometric information is encoded in the matrix representation, but on the other hand, for certain applications one might  be willing to sacrifice the information about the parametric degree in order to obtain smaller matrices. If this is the case, there exist (for certain surface parametrizations)  algorithms to compute a proper reparametrization of the surface, e.g. \cite{MR2218599}, and in these cases it is highly advisable to do so before computing the matrix representation, because this will allow us to represent the implicit equation directly instead of one of its powers, and the matrices will be significantly smaller. Let us illustrate this with Example 2 from \cite{MR2218599}, which treats a parametrization $f$ defined by
\begin{eqnarray*}
 f_1 &=& (s^4t^4+2s^4t^2+5s^4+2t^4+4t^2+11)(s^4+1) \\
f_2 &=& (s^4t^4+2s^4t^2+5s^4+t^4+2t^2+6)\\
f_3 &=&  -(s^4t^4+2s^4t^2+5s^4+t^4+2t^2+3)(s^4+1)\\
f_4 & =& (t^4+2t^2+5)(s^4+1)
\end{eqnarray*}

\noindent This is a parametrization of bidegree $(8,4)$ and its Newton polytope is the whole rectangle of length 8 and width 4, so we can apply the method of
approximation complexes for $\PP^1 \times \PP^1$. We obtain a matrix of size $45 \times 59$ representing $F_\Sc^{16}$, where $$F_\Sc = 2T_1T_2-T_2T_3-3T_1T_4-2T_2T_4+3T_4^2$$
is the implicit equation and $\deg(f)=16$. Using the algorithm presented in \cite{MR2218599} one can compute the following proper reparametrization of the surface $\Sc$:
\begin{eqnarray*}
 f_1 &=&-(11+st-5s-2t)(s-1) \\
f_2 &=& 6-t-5s+st\\
f_3 &=& (-t+st-5s+3)(s-1)\\
f_4 & =& (t-5)(s-1)
\end{eqnarray*}
This parametrization of bidegree $(2,1)$ represents $F_\Sc$ directly by a $6 \times 11$-matrix.
\end{exmp}

\section{Final remarks} \label{sec:final}
\noindent Representation matrices can be efficiently constructed by solving a linear system of relatively small size (in our case $\dim_\kk(A_{\nu+d})$ equations in $4 \dim_\kk(A_\nu)$ variables). This means that their computation is much faster than the computation of the implicit equation and they are thus an interesting alternative as an implicit representation 
of the surface. 

In this paper, we have extended the method of matrix representations by linear syzygies to the case of rational surfaces parametrized over toric varieties (and in particular to bihomogeneous parametrizations). This generalization provides a better understanding of the method through the use of combinatorial commutative algebra. From a practical point of view, it is also a major improvement, as it makes the method 
applicable for a much wider range of parametrizations (for example, by avoiding unnecessary base points with bad properties) and
leads to significantly smaller representation matrices.  Let us sum up the advantages and disadvantages compared to other techniques
to compute matrix representations (e.g. the ones introduced in \cite{KD06}). The most important advantages are:
\begin{itemize}
\item The method works in a very general setting and makes only minimal assumptions on the parametrization. In particular, it works well in the presence of base points.
\item Unlike the method of toric resultants, we do not have to extract a maximal minor of unknown size, since the matrices
are generically of full rank.
\item The structure of the Newton polytope of the parametrization is exploited, so one obtains  much better results for sparse
parametrizations, both in terms of computation time and in terms of the size of the representation matrix. Moreover, it subsumes
the known method of approximation complexes in the case of dense homogeneous parametrizations, in which case the methods coincide.
\end{itemize}
\noindent Disadvantages of the method are the following.
\begin{itemize}
\item Unlike with the toric resultant or the method of moving planes and surfaces, the matrix representations are not square.
\item The matrices involved are generally bigger than with the me\-thod of moving planes and surfaces.
\end{itemize}
\noindent It is important to remark that those disadvantages are inherent to the choice of the method: A square matrix built from linear syzygies does
not exist in general and it is an automatic consequence that if one only uses linear syzygies to construct the matrix, it has to be bigger than a matrix which also uses entries of higher degree. The choice of the method to use depends very much on the given parametrization and on what one needs to do with the matrix representation.

\section*{Appendix: Implementation in Macaulay2}
\noindent In this appendix we show how to compute a matrix representation
with the method developed in this paper, using the computer algebra system Macaulay2 \cite{M2}.
As it is probably the most interesting case from a practical point
of view, we restrict our computations to bi-homogeneous parametrizations of a certain bi-degree $(e_1,e_2)$.
However, the method is easily adaptable to the toric case, or more precisely to a given fixed Newton polytope
$\New(f)$ and, where it is appropriate, we will give hints on what to change in the code.
Moreover, we are not claiming that our implementation is optimized for efficiency; anyone
trying to implement the method to solve computationally involved examples is well-advised to
give more ample consideration to this issue. For example, in the toric case there are better suited
software systems to compute the generators of the toric ideal $J$, see \cite{4ti2}.

Let us start by defining the parametrization $f$ given by $(f_1,\ldots,f_4)$.
\begin{verbatim}
S=QQ[s,u,t,v];
e1=4;
e2=2;
f1=s^4*t^2+2*s*u^3*v^2
f2=s^2*u^2*t*v-3*u^4*t*v
f3=s*u^3*t*v+5*s^4*t^2
f4=2*s*u^3*v^2+s^2*u^2*t*v
F=matrix{{f1,f2,f3,f4}}
\end{verbatim}
The reader can experiment with the implementation simply by changing the definition of the polynomials
and their degrees, the rest of the code being identical. We first set up the list $st$ of monomials $s^it^j$ of bidegree $(e'_1,e'_2)$. In the toric case,
this list should only contain the monomials corresponding to points in the Newton polytope $\New'(f)$.
\begin{verbatim}
st={};
l=-1;
d=gcd(e1,e2)
ee1=numerator(e1/d);
ee2=numerator(e2/d);

for i from 0 to ee1 do (
   for j from 0 to ee2 do (
   st=append(st,s^i*u^(ee1-i)*t^j*v^(ee2-j));
   l=l+1
   )
)
\end{verbatim}

\noindent We compute the ideal $J$ and the quotient ring $A$. This is done
by a Gr\"obner basis computation which works well for examples of
small degree, but which should be replaced by the matrix formula
in \eqref{Jgens} for more complicated examples. In the toric case,
there exist specialized software systems such as \cite{4ti2} to compute the ideal $J$.
\begin{verbatim}
SX=QQ[s,u,t,v,w,x_0..x_l,MonomialOrder=>Eliminate 5]

X={};
st=matrix {st};
F=sub(F,SX)
st=sub(st,SX)

te=1;
for i from 0 to l do ( te=te*x_i )

J=ideal(1-w*te)
for i from 0 to l do (
    J=J+ideal (x_i - st_(0,i))
    )
J= selectInSubring(1,gens gb J)

R=QQ[x_0..x_l]
J=sub(J,R)
A=R/ideal(J)
\end{verbatim}
\noindent Next, we set up the list $ST$ of monomials $s^it^j$ of bidegree $(e_1,e_2)$ and the list $X$ of
the corresponding elements of the quotient ring $A$. In the toric case,
this list should only contain the monomials corresponding to points in the Newton polytope $\New(f)$.
\begin{verbatim}
use SX
ST={};
for i from 0 to e1 do (
    for j from 0 to e2 do (
    ST=append(ST,s^i*u^(e1-i)*t^j*v^(e2-j));
    )
)

X={};
for z from 0 to length(ST)-1 do (
    f=ST_z;
    xx=1; 	
    is=degree substitute(f,{u=>1,v=>1,t=>1});
    is=is_0;
    it=degree substitute(f,{u=>1,v=>1,s=>1});
    it=it_0;
    iu=degree substitute(f,{t=>1,v=>1,s=>1});
    iu=iu_0;
    iv=degree substitute(f,{u=>1,t=>1,s=>1});
    iv=iv_0;
    ded=0;	  
  while ded < k do (
     for mm from 0 to l do (
        js=degree substitute(st_(0,mm),{u=>1,v=>1,t=>1});
        js=js_0;
        jt=degree substitute(st_(0,mm),{u=>1,v=>1,s=>1});
        jt=jt_0;
        ju=degree substitute(st_(0,mm),{t=>1,v=>1,s=>1});
        ju=ju_0;
        jv=degree substitute(st_(0,mm),{u=>1,t=>1,s=>1});
        jv=jv_0;
	if is>=js and it>=jt and iu>=ju and iv>=jv then (
	    xx=xx*x_mm;
	    ded=ded+1;
	    is=is-js;
	    it=it-jt;
	    iv=iv-jv;
	    iu=iu-ju; )));
   X=append(X,xx); )
\end{verbatim}
We can now define the new parametrization $g$ by the polynomials $g_1,\ldots,g_4$. 
 \begin{verbatim}
X=matrix {X};
X=sub(X,SX)
(M,C)=coefficients(F,Variables=>
          {s_SX,u_SX,t_SX,v_SX},Monomials=>ST)
G=X*C 
G=matrix{{G_(0,0),G_(0,1),G_(0,2),G_(0,3)}}
G=sub(G,A)
\end{verbatim}
In the following, we construct the matrix representation $M$. For simplicity, we
compute the whole module $\Zc_1$, which is not necessary as we only need the graded
part $(\Zc_1)_{\nu_0}$. In complicated examples, one should compute only this graded part
by directly solving the linear system given by \eqref{syzygyequation} in degree $\nu_0$. Remark that the best
bound $\mathrm{nu}= \nu_0$ depends on the parametrization.
\begin{verbatim}
use A
Z1=kernel koszul(1,G);
nu=2*d-1 
S=A[T1,T2,T3,T4]
G=sub(G,S);
Z1nu=super basis(nu+d,Z1); 
Tnu=matrix{{T1,T2,T3,T4}}*substitute(Z1nu,S); 

lll=matrix {{x_0..x_l}}
lll=sub(lll,S)
ll={}
for i from 0 to l do { ll=append(ll,lll_(0,i)) }
(m,M)=coefficients(Tnu,Variables=>
          ll,Monomials=>substitute(basis(nu,A),S));
M;
\end{verbatim}
The matrix $M$ is the desired matrix representation of the surface $\Sc$.

\section*{Acknowledgements} We thank Laurent Bus\'e and Marc Chardin for useful discussions.

\bibliographystyle{alpha}

\end{document}